\documentclass[11pt]{article}

\usepackage{amsmath,amssymb,amscd,amsthm,amsfonts}
\usepackage{graphicx,subfigure}
\usepackage{hyperref}
\usepackage{dsfont}
\usepackage{color}

\newtheorem{theorem}{Theorem}[section]
\newtheorem{theoremp}[theorem]{Theorem}
\newtheorem{lemma}[theorem]{Lemma}

\newtheorem{corollary}[theorem]{Corollary}
\newtheorem{conjecture}[theorem]{Conjecture}

\newtheorem{problem}[theorem]{Problem}

\newcommand{\rr}{\mathds{R}}

\def\rr{\mathds{R}}
\DeclareMathOperator{\conv}{conv}

\DeclareMathOperator{\depth}{depth}
\title{Robust Tverberg and colorful Carath\'eodory results via random choice}

\author{ Pablo Sober\'on}

\begin{document}

\maketitle

\begin{abstract}
	We use the probabilistic method to obtain versions of the colorful Carath\'eodory theorem and Tverberg's theorem with tolerance.
	
	In particular, we give bounds for the smallest integer $N=N(t,d,r)$ such that for any $N$ points in $R^d$, there is a partition of them into $r$ parts for which the following condition holds: after removing any $t$ points from the set, the convex hulls of what is left in each part intersect.

	  We prove a bound $N=rt+O(\sqrt{t})$ for fixed $r,d$ which is polynomial in each parameters.  Our bounds extend to colorful versions of Tverberg's theorem, as well as Reay-type variations of this theorem.
\end{abstract}

MSC2010 Classification:  52A35, 05D40

Keywords: Probabilistic method, Tverberg's theorem, Colorful Carath\'eodory theorem

\section{Introduction}

The colorful Carath\'eodory theorem and Tverberg's theorem are two gems of
 combinatorial geometry.  They describe properties of sets of points in $\rr^d$, each with a vast number of extensions and generalizations (for an introduction, see \cite{Matousek:2002td}).  The purpose of this paper is to show how an application of the probabilistic method yields robust versions of both results; i.e., the conclusions hold even if a small set of points is removed.

There are many applications of the probabilistic method in discrete geometry.  Among some notable examples are the crossing number theorem \cite{crossingnumber, Sze97}, the cutting lemma \cite{Cla87, CF90} and the existence of epsilon-nets for families of sets with bounded VC-dimension \cite{HW87}.  Further examples and an introduction to the general method can be found in \cite{Matousek:2002td, alonspencer}.

Let us begin with the colorful Carath\'eodory theorem, due to B\'ar\'any \cite{Barany:1982va}.  If we denote by $\conv(X)$ the convex hull of a set $X\subset \rr^d$, it says the following.

\begin{theoremp}[B\'ar\'any 1982]
Let $F_1, \ldots, F_{d+1}$ be $d+1$ families of points in $\rr^d$, considered as color clases.  If $0 \in \conv(F_i)$ for each $i$, there is a colorful choice $x_1 \in F_1, \ldots, x_{d+1} \in F_{d+1}$ such that $0 \in \conv\{x_1, x_2, \ldots, x_{d+1}\}$.
\end{theoremp}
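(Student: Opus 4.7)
The plan is to use a minimum-distance argument. First, by applying Carath\'eodory's theorem to each color class, one may assume each $F_i$ is finite, say consisting of at most $d+1$ points whose convex hull still contains $0$. There are then only finitely many colorful selections; among them, pick one $S = \{x_1, \ldots, x_{d+1}\}$ minimizing the Euclidean distance from $0$ to $\conv(S)$, and let $p$ be the point of $\conv(S)$ realizing this distance. If $p = 0$ we are done, so assume $p \neq 0$ and aim for a contradiction.

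Let $H$ be the hyperplane through $p$ orthogonal to $p$. By the first-order optimality condition for the nearest-point projection, $\conv(S)$ lies in the closed half-space $\{x : \langle x, p\rangle \geq \|p\|^2\}$, and $p$ belongs to the face $F = \conv(S) \cap H$. Since $F \subseteq H$ and $\dim H = d-1$, Carath\'eodory's theorem inside $F$ guarantees that $p$ can be written as a convex combination of at most $d$ of the points $x_i$. In particular there is a color index $j$ with $p \in \conv(S \setminus \{x_j\})$.

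Now I would use the hypothesis $0 \in \conv(F_j)$: since $0$ lies strictly on the origin-side of $H$, not every point of $F_j$ can lie on the far side of $H$, so there exists $x_j' \in F_j$ with $\langle x_j', p\rangle < \|p\|^2$. Form the swapped colorful simplex $S' = (S \setminus \{x_j\}) \cup \{x_j'\}$. Because $p \in \conv(S \setminus \{x_j\}) \subseteq \conv(S')$ and also $x_j' \in \conv(S')$, the entire segment $[p, x_j']$ lies in $\conv(S')$. A direct computation of $\|(1-t)p + t x_j'\|^2$ shows the derivative at $t = 0$ equals $2\langle p, x_j' - p\rangle = 2(\langle p, x_j'\rangle - \|p\|^2) < 0$, so small movement from $p$ toward $x_j'$ strictly decreases the distance to the origin, contradicting the minimality of $S$.

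The main obstacle is the step of freeing up a color for the swap when $\conv(S)$ is degenerate, since then every representative $x_i$ may in principle already lie on the hyperplane $H$ and be an active vertex of the minimizing face. The dimension bound $\dim F \leq d-1$, which forces the $d+1$ points $x_i$ on $H$ to be affinely dependent and hence $p$ to be expressible with at most $d$ of them, is exactly what overcomes this difficulty and makes the minimality-contradiction argument go through uniformly.
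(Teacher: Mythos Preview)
Your argument is correct and is exactly B\'ar\'any's original minimum-distance proof of the colorful Carath\'eodory theorem. Note, however, that the paper does not give its own proof of this statement: it is quoted as a known result with a citation to B\'ar\'any~1982, so there is no ``paper's proof'' to compare against for this particular theorem.

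What the paper \emph{does} prove is the stronger Theorem~\ref{theorem-carath-old} (with $N\ge d+1$ colors), and it is worth noting how that argument relates to yours. Both begin identically: choose a colorful simplex $X_0$ minimizing the distance to the origin, take the nearest point $p$, and observe that $p$ lies in a face spanned by at most $d$ of the colors. For the classical case $N=d+1$, your single-swap argument (replace the free color $x_j$ by a point $x_j'\in F_j$ on the near side of $H$ and slide along $[p,x_j']$) finishes immediately. For the paper's stronger conclusion, that swap alone does not yield $N-d+1$ colors whose union misses the origin; instead the paper continues with a topological parity argument, looking at the colorful facets of the cross-polytope $\{x_1,\dots,x_d,u_1,\dots,u_d\}$ and counting intersections with a broken ray $\ell_1\cup\ell_2$. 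So your proof is the standard and fully adequate one for the stated theorem, while the paper's more elaborate argument is tailored to its generalization.
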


Given a set $X \subset \rr^d$ and a point $p$, we say $X$ \textit{captures} $p$ if $p \in \conv(X)$.  We are interested in versions of the colorful Carath\'eodory theorem above  where the number of color classes is allowed to increase.  There are two natural variations of this kind, which we discuss in Section \ref{section-carath}.  Our main result is Lemma \ref{theorem-carath}, which shows the existence of a colorful choice which captures the origin even if any small subset of points is removed.

 Among the numerous consequences of the colorful Carath\'eodory theorem, there is a strickingly short proof of Tverberg's theorem by Sarkaria \cite{Sarkaria:1992vt}, later simplified by B\'ar\'any and Onn \cite{BaranyOnn}.  For a survey regarding Sarkaria's transformation, consult \cite{baranytensors}.

\begin{theoremp}[Tverberg 1966 \cite{Tverberg:1966tb}]
Given positive integers $d,r$ and $N=(d+1)(r-1)+1$ points in $\rr^d$, there is a partition of them into $r$ parts $A_1, \ldots, A_r$ such that
\[
\bigcap_{j = 1}^{r}\conv (A_j) \neq \emptyset.
\]
\end{theoremp}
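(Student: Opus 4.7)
I plan to reuse Sarkaria's tensor trick, which reduces Tverberg's theorem to the colorful Carath\'eodory theorem stated above. The strategy is to build an instance of colorful Carath\'eodory in a higher-dimensional space whose dimension is tuned so that exactly $N$ color classes are available, one per input point.

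First I would lift each of the $N=(d+1)(r-1)+1$ input points $p_1,\ldots,p_N\in\rr^d$ to $\hat p_i=(p_i,1)\in\rr^{d+1}$, and fix vectors $v_1,\ldots,v_r\in\rr^{r-1}$ satisfying $v_1+\cdots+v_r=0$ with any $r-1$ of them linearly independent (for instance, the vertices of a centered regular simplex). For each $i\in\{1,\ldots,N\}$ I would form the color class
\[
F_i=\{\hat p_i\otimes v_1,\ldots,\hat p_i\otimes v_r\}\subset \rr^{(d+1)(r-1)}.
\]
Since $v_1+\cdots+v_r=0$, the uniform convex combination of $F_i$ equals $0$, so $0\in\conv(F_i)$. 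The ambient dimension equals $N-1$, so the hypotheses of the colorful Carath\'eodory theorem are met, and it produces a colorful choice $\hat p_i\otimes v_{\sigma(i)}\in F_i$ together with nonnegative $\lambda_i$ summing to $1$ with $\sum_i\lambda_i\,\hat p_i\otimes v_{\sigma(i)}=0$.

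To extract the Tverberg partition I would set $A_j=\{p_i:\sigma(i)=j\}$ and $w_j=\sum_{i:\sigma(i)=j}\lambda_i\,\hat p_i\in\rr^{d+1}$, so that the colorful relation reads $\sum_j w_j\otimes v_j=0$. Reading this coordinate-wise in the $\rr^{d+1}$ factor, for each coordinate $s$ the vector $(w_j^{(s)})_{j=1}^{r}$ lies in the kernel of the $(r-1)\times r$ matrix whose columns are the $v_j$, and under our hypotheses this kernel is spanned by $(1,\ldots,1)$. Hence $w_1=\cdots=w_r$; the last coordinate then yields $\sum_{i\in A_j}\lambda_i=1/r$ for every $j$, and the first $d$ coordinates, rescaled by $r$, give the desired common point in $\bigcap_j\conv(A_j)$.

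The main obstacle I anticipate is a bookkeeping subtlety: the colorful Carath\'eodory conclusion does not \emph{a priori} force every part $A_j$ to be nonempty. I would handle this either by allowing empty parts in the statement, or by noting that if some $A_j$ is empty then $w_j=0$ forces all $w_k=0$, and in that case a single element of some nonempty $A_k$ can be reassigned into the empty class without destroying the intersection property.
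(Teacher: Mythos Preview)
Your approach is exactly Sarkaria's tensor reduction to the colorful Carath\'eodory theorem, which is precisely what the paper invokes (citing Sarkaria and B\'ar\'any--Onn) and then reuses verbatim in Section~\ref{section-tverberg} for the tolerance version; the paper does not give a separate proof of the classical statement.

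One small cleanup on your last paragraph: the ``reassignment'' fix is unnecessary and a bit off. Your own observation already finishes the argument by contradiction. If some $A_j$ is empty then $w_j=0$, hence $w_1=\cdots=w_r=0$; reading the last coordinate gives $\sum_{i\in A_k}\lambda_i=0$ for every $k$, so all $\lambda_i=0$, contradicting $\sum_i\lambda_i=1$. Thus no part can be empty, and option~1 (allowing empty parts) is both unneeded and would not suffice anyway, since $\conv(\emptyset)=\emptyset$.
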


One of the generalizations of this result, also known as Tverberg with tolerance, consists in finding partitions where the convex hulls of the parts intersect even after any $t$ points are removed.  Stated precisely, it says the following.

\begin{problem}[Tverberg partitions with tolerance]
Given positive integers $t,d,r$, find the smallest positive integer $N(t,d,r)$ such that for any set of $N(t,d,r)$ points in $\rr^d$, there is a partition of them into $r$ parts $A_1, A_2, \ldots, A_r$ such that for any set $C$ of at most $t$ points

\[
\bigcap_{j=1}^r \conv (A_j \setminus C) \neq \emptyset.
\]
\end{problem}

We refer to the parameter $t$ as the \textit{tolerance} of the partition.  The first bound for such partitions was given by Larman \cite{Larman:1972tn}, showing that $N(1,d,2) \le 2d+3$.  Larman's result is known to be optimal up to dimension four \cite{Forge:2001te}.  This was later improved by Garc\'ia-Col\'in to $N(t,d,2) \le (t+1)(d+1) +1$ \cite{GarciaColin:2007td, MR3386018}.  She also showed that $N(t,d,r) \le (t+1)(d+1)(r-1)+t+1$ for any triple $(t,d,r)$.  Garc\'ia-Col\'in conjectured a bound on $N$ extending her result, which was proven by Sober\'on and Strausz \cite{Soberon:2012er}, $N(t,d,r) \le (t+1)(d+1)(r-1)+1$.

The Sober\'on-Strausz bound is known not to be optimal, as shown by Mulzer and Stein for $d=1$ and in some instances for $d=2$ \cite{Mulzer:2013je}.  Recently, this was vastly improved by Garc\'ia-Col\'in, Raggi and Rold\'an-Pensado \cite{tolerance-new}, who showed that for fixed $r, d$ we have $N(t,d,r) = rt + o(t)$.  This settles the asymptotic behavior of $N(t,d,r)$ for large $t$, as the leading term matches the one for the lower bound $N(t,d,r) \ge rt+\frac{rd}{2}$, first given in \cite{soberon2015equal}.

However, the $o(t)$ term hides a $\operatorname{twr}_d ( O (r^2d^2))$ factor, where the tower functions $\operatorname{twr}_i(\alpha)$ are defined by $\operatorname{twr}_1 (\alpha) = \alpha$ and $\operatorname{twr}_{i+1} (\alpha) = 2^{\operatorname{twr}_i (\alpha)}$.  The tower function is unavoidable with the method they use, as it relies on geometric Ramsey-type results. Our main result is a new upper bound for $N(t,d,r)$.  In our bound, the leading term is also $rt$ for large $t$, and the bound is polynomial in $r,t,d$.

\begin{theorem}\label{theorem-tverberg}
For positive integers $t,d,r$, let $N(t,d,r)$ be the optimal number for Tverberg's theorem with tolerance.  Then, we have
\[
N(t,d,r) = rt + \tilde{O}(r^2\sqrt{td} + r^3d),
\]
where the $\tilde{O}$ notation hides only polylogarithmic factors in $t$, $d$ and $r$.
\end{theorem}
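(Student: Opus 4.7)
The plan is to deduce Theorem \ref{theorem-tverberg} from Lemma \ref{theorem-carath} (the colorful Carathéodory with tolerance) via Sarkaria's tensor transformation. Given $N$ points $x_1, \ldots, x_N \in \rr^d$, I lift each to $\bar{x}_i = (x_i, 1) \in \rr^{d+1}$ and fix vectors $v_1, \ldots, v_r \in \rr^{r-1}$ that sum to $0$ and are such that any $r-1$ of them are linearly independent (e.g., the vertices of a regular simplex centered at the origin). For each $i$, form the color class
\[
F_i = \{\bar{x}_i \otimes v_j : j = 1, \ldots, r\} \subset \rr^D, \qquad D = (d+1)(r-1).
\]
Since $\frac{1}{r}\sum_{j=1}^r v_j = 0$, we get $0 \in \conv(F_i)$ for each $i$.

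Now I would apply Lemma \ref{theorem-carath} to the color classes $F_1, \ldots, F_N$ in $\rr^D$. For $N$ sufficiently large, this yields a colorful choice $y_i = \bar{x}_i \otimes v_{j(i)}$ for some function $j : \{1, \ldots, N\} \to \{1, \ldots, r\}$ with the $t$-tolerance property: for every $C \subseteq \{1, \ldots, N\}$ with $|C| \le t$, one has $0 \in \conv\{y_i : i \notin C\}$. Defining the partition $A_j = \{x_i : j(i) = j\}$, the standard Sarkaria unpacking applies: any convex dependence $\sum_{i \notin C} \lambda_i (\bar{x}_i \otimes v_{j(i)}) = 0$ regroups to $\sum_j w_j^{(C)} \otimes v_j = 0$ with $w_j^{(C)} = \sum_{i \notin C, j(i) = j} \lambda_i \bar{x}_i$, and the linear-independence hypothesis on any $r-1$ of the $v_j$'s forces all $w_j^{(C)}$ to coincide. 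Reading off the last coordinate shows every part carries weight $1/r > 0$, so the common value $w^{(C)}$ gives a point in $\bigcap_j \conv(A_j \setminus C)$.

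The main obstacle is bookkeeping: I need to check that the bound furnished by Lemma \ref{theorem-carath}, applied in ambient dimension $D = (d+1)(r-1) \approx rd$ to classes of size $r$, yields exactly $N = rt + \tilde{O}(r^2\sqrt{td} + r^3 d)$. The $rt$ leading term should arise because the $r$-point classes $F_i$ are, up to the scalar $\bar{x}_i$, a common balanced frame $\{v_j\}$; this forces the random colorful selection to distribute the indices roughly uniformly among the $r$ parts, so the effective tolerance in the tensor picture scales like $t$ per part rather than $t$ in total. The $\tilde{O}(r^2\sqrt{td})$ correction should come from Chernoff-type deviations in both the balancedness of $j(\cdot)$ and in the concentration estimate underlying Lemma \ref{theorem-carath} applied in dimension $D \sim rd$, while the additive $\tilde{O}(r^3 d)$ term absorbs the baseline colorful Carathéodory cost of $D+1$ together with logarithmic union-bound factors over the $\binom{N}{t}$ possible sets $C$. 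Care must be taken to ensure the two sources of randomness (balancedness and tensor-space colorful tolerance) can be combined without an exponential blowup, which is what the polynomial nature of the argument in Lemma \ref{theorem-carath} buys us in place of the Ramsey-theoretic tower from \cite{tolerance-new}.
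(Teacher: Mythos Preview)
Your first two paragraphs match the paper's proof exactly: Sarkaria's tensor lift followed by Lemma~\ref{theorem-carath} in dimension $D=(d+1)(r-1)$, then the standard unpacking.

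The third paragraph, however, introduces complications that are not there and obscures the simple bookkeeping that actually does the job. There is no ``balancedness of $j(\cdot)$'' issue and no union bound over the $\binom{N}{t}$ sets $C$. The point is that Lemma~\ref{theorem-carath} produces a colorful choice $X$ with
\[
\depth(X,0) \ \ge\ \frac{N}{r}-\sqrt{\tfrac{(d+1)(r-1)\,N\ln(Nr)}{2}},
\]
and $\depth(X,0)\ge t+1$ is \emph{already} the statement that $0\in\conv(X\setminus C)$ for every $C$ with $|C|\le t$; removing $t$ points can drop the depth by at most $t$. So the only task is to solve
\[
\frac{N}{r}-\sqrt{\tfrac{(d+1)(r-1)\,N\ln(Nr)}{2}} \ \ge\ t+1
\]
for $N$. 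Writing $N=rt+p$ and squaring, one gets $p^2 \gtrsim r^3 d\,(rt+p)\ln(Nr)$; the two regimes $p\ll rt$ and $p\gtrsim rt$ give $p=\tilde{O}(r^2\sqrt{td})$ and $p=\tilde{O}(r^3 d)$ respectively, whence $N=rt+\tilde{O}(r^2\sqrt{td}+r^3 d)$. The $rt$ leading term is just the $N/r$ in the depth bound, not anything about per-part tolerance, and the $r^3 d$ term is the quadratic crossover, not a ``baseline $D+1$'' cost.
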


We should stress that the bound by Garc\'ia-Col\'in, Raggi and Rold\'an-Pensado is quite surprising by itself.  If are given less than $rt$ points, a trivial application of the pigeonhole principle shows that any partition of them into $r$ parts has one with at most $t$ points.  The removal of these points shows that the tolerance of any Tverberg partition is at most $t-1$.

In other words, with a large number of points the effect of the dimension on the combinatorics behind Tverberg's theorem with tolerance fades away.  Our result reinforces this counterintuitive claim by showing that, furthermore, one doesn't need to worry too much about the construction of the partition; a random one should suffice.  For other results in discrete geometry with tolerance, see \cite{Montejano:2011cg}.

Theorem \ref{theorem-tverberg} improves all previously known bounds when $t$ is large.  If $r > d$, it can be further improved to $N(t,d,r) = rt + \tilde{O}(rd\sqrt{rt} + r^2d^2)$ (see Theorem \ref{theorem-reay}), but both bounds remain $rt + \tilde O(\sqrt{t})$ for fixed $r, d$ with the same degree for the polynomial hidden by the $\tilde{O}$ notation.  The methods we use can be applied to yield versions with tolerance of several variations of Tverberg's theorem.  We exhibit this for two classic variations of Tverberg's theorem.  The first is the colored Tverberg theorem with tolerance, where the set of points and the desired partitions we want to obtain are given additional combinatorial conditions.  Our main result in this setting is the following theorem.

\begin{theorem}
Let $r \ge 3$.  Suppose we are given $(1.6) t + \tilde{O}(r\sqrt{td}+r^2d)$ families of $r$ points each in $\rr^d$, considered as color classes.  Then, there is a partition of them into $r$ sets $A_1, \ldots, A_r$, each with exactly one point of each color, such that even if any $t$ color classes are removed, the convex hulls of what is left in each $A_i$ intersect.  If $r=2$, the same result holds with $2 t + \tilde{O}(\sqrt{td}+d)$ families.  The $\tilde{O}$ notation only hides polylogarithmic factors in $r,t,d$.
\end{theorem}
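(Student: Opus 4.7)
My plan is to adapt the strategy of Theorem~\ref{theorem-tverberg}: I would use Sarkaria's tensor trick to reduce the colored Tverberg-with-tolerance problem to a robust colorful Carath\'eodory problem and then apply Lemma~\ref{theorem-carath}. The reason the leading coefficient drops from $r$ (as in the uncolored case) to $1.6$ is that here each Sarkaria color class has $r!$ tensor vectors instead of the $r$ vectors produced in the uncolored setup.

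\emph{Setup.} Fix $v_1,\ldots,v_r\in\rr^{r-1}$ with $v_1+\cdots+v_r=0$ and any $r-1$ of them linearly independent. For each color class $C_i=\{p_{i,1},\ldots,p_{i,r}\}$ and each $\sigma\in S_r$, put
\[
V_i^\sigma=\sum_{k=1}^r \hat p_{i,k}\otimes v_{\sigma(k)}\ \in\ \rr^{(d+1)(r-1)},
\]
where $\hat p=(p,1)$, and let $F_i=\{V_i^\sigma:\sigma\in S_r\}$. A colorful choice $V_1^{\sigma_1},\ldots,V_N^{\sigma_N}$ encodes the rainbow partition $A_j=\{p_{i,\sigma_i^{-1}(j)}:i\in[N]\}$. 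The standard Sarkaria identity shows that if $0\in\conv\{V_i^{\sigma_i}:i\in S\}$ for some $S\subseteq[N]$, then writing the relation as $\sum_j X_j\otimes v_j=0$ and using that the $v_j$'s sum to zero forces all the $X_j$ to agree; reading off the first $d$ coordinates exhibits a common point in $\bigcap_j \conv\bigl(A_j\cap\bigcup_{i\in S}C_i\bigr)$. Hence a rainbow Tverberg partition with tolerance $t$ can be obtained from any colorful choice whose origin-containment survives the removal of any $t$ elements.

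\emph{Hypothesis and application of Lemma~\ref{theorem-carath}.} For every $i$, averaging over $S_r$ gives
\[
\frac{1}{r!}\sum_{\sigma\in S_r}V_i^\sigma=\frac{1}{r}\sum_{k=1}^r \hat p_{i,k}\otimes(v_1+\cdots+v_r)=0,
\]
since for each fixed $k$ the index $\sigma(k)$ is uniform on $[r]$. Hence $0\in\conv F_i$, the hypothesis of Lemma~\ref{theorem-carath} holds, and I would invoke the lemma on $F_1,\ldots,F_N$ in $\rr^{(d+1)(r-1)}$ with tolerance parameter $t$. Translating the resulting robust colorful choice back via the paragraph above yields the rainbow Tverberg partition we want.

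\emph{Reading off the bound.} The bound on $N$ is obtained by plugging the ambient dimension $(d+1)(r-1)$ and the class size $|F_i|=r!$ into the quantitative form of Lemma~\ref{theorem-carath}. The dimensional part collapses to $\tilde O(r\sqrt{td}+r^2d)$, and the leading coefficient of $t$ is bounded by $1.6$ as soon as $|F_i|=r!\ge 6$, i.e.\ as soon as $r\ge 3$. For $r=2$ the family $F_i$ degenerates to the antipodal pair $\{V_i,-V_i\}$ (because $v_2=-v_1$), and Lemma~\ref{theorem-carath} only yields the coefficient $2$, matching the uncolored $r=2$ case.

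\emph{Main obstacle.} The principal difficulty is controlling the leading coefficient in $t$, not the error terms. Pushing the naive constant down to the uniform value $1.6$ for all $r\ge 3$ requires exploiting the full flexibility that $|F_i|=r!\ge 6$ choices per class give to the random-choice argument inside Lemma~\ref{theorem-carath}; the dimensional bookkeeping afterwards, substituting $(d+1)(r-1)$ into the error term, is routine.
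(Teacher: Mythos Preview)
Your Sarkaria reduction via the summed tensors $V_i^\sigma$ is correct, and so is the claim that $\depth(\{V_i^{\sigma_i}\},0)\ge t+1$ implies the desired tolerance.  The gap is in the last step, where you invoke Lemma~\ref{theorem-carath} on families of size $|F_i|=r!$ and assert a leading coefficient of $1.6$.  The bound in Lemma~\ref{theorem-carath} is $\depth(X,0)\ge N/|F_i|-\sqrt{\cdots}$, because its proof only uses $\mathbb{P}(x_i\in H)\ge 1/|F_i|$.  Plugging in $|F_i|=r!$ therefore yields $N\ge (r!)\,t+\tilde O(\cdots)$, which is far worse than the paper's bound; larger $|F_i|$ hurts, it does not help.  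To get the coefficient $c_r=1/p(r)\le 1.6$ you would have to show that for \emph{every} halfspace $H$ through $0$ one has $\mathbb{P}_\sigma(V_i^\sigma\in H)\ge p(r)$, i.e.\ that for any real matrix $(a_{k,j})$ with zero row sums, a uniformly random permutation $\sigma$ satisfies $\sum_k a_{k,\sigma(k)}\ge 0$ with probability at least $p(r)$.  You neither state nor prove anything of this kind; the phrase ``exploiting the full flexibility that $|F_i|=r!\ge 6$ choices give'' is not an argument.

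The paper avoids this difficulty by \emph{not} collapsing the $r$ tensor points of a color class into a single vector.  Each color class becomes a ``colored $r$-block'' of $r^2$ points $(p_{i,k},1)\otimes u_j$ in $\rr^{(d+1)(r-1)}$, and a colorful partition corresponds to a permutation choosing one point per $k$ with distinct $u$-indices.  The key observation is then purely combinatorial: since each of the $r$ subfamilies $\{(p_{i,k},1)\otimes u_j:j\}$ has barycenter $0$, each meets $H$, giving one ``forbidden value'' per row; the probability a random permutation hits at least one forbidden value is at least the fixed-point probability $p(r)=1-D_r/r!$, and the paper shows this is minimized when the forbidden values are all distinct.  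This yields $c_r=1/p(r)$, with $c_2=2$, $c_4=8/5=1.6$ (the maximum over $r\ge 3$), and $c_r\to e/(e-1)$.  That derangement lemma is exactly the missing idea in your plan.
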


The result above with a precise constant on the leading term is presented in Theorem \ref{colored-tverberg-padre}. 

 The second variation we present is related to Reay's conjecture.  Reay's conjecture is a relaxation of Tverberg's theorem.  The aim is, given a set of points in $\rr^d$, to find a partition of them into $r$ parts where the convex hulls of any $k$ parts intersect.  We call such partitions a Reay partition.  Tverberg partitions are those for which $k=r$.  It is an open question whether less points than those for Tverberg's theorem are needed to guarantee such a partition if $k<r$.  For the best bounds for this problem, see \cite{PS16, frickreu16}.
 
   In section \ref{section-reay} we show bounds for the number of points that guarantee the existence for Reay partitions with tolerance.  These bounds are smaller than those of Theorem \ref{theorem-tverberg}.  This is the first instance of a Reay-type result where the existence bounds are smaller than its Tverberg counterpart, albeit neither is known to be optimal.  We prove our Tverberg-type results in sections \ref{section-tverberg}, \ref{section-coloredtverberg} and \ref{section-reay}.

  We conclude by presenting remarks, open problems and algorithmic consequences of our results in section \ref{section-remarks}.
  
\section{Robust Carath\'eodory results}\label{section-carath}

The goal of this section is to extend the colorful Carath\'eodory theorem if we are given $N$ color classes instead of $d+1$.  We may try a direct approach and ask, given $N$ color classes, what conclusions can be reached if every color captures the origin.  Another option is, given $N$ color classes, to extend the contrapositive of the theorem and ask what happens if no colorful choice captures the origin.

For the latter case, a strengthening of the colorful Carath\'eodory theorem implies that, given $d+1$ color classes in $\rr^d$ such that no colorful choice captures the origin, there are \textit{two} colors $F_i, F_j$ whose union doesn't capture the origin (i.e., $0 \not\in \conv(F_i \cup F_j)$).  This was proved independently in \cite{Arocha:2009ft} and \cite{holmsen:2008id}.

\begin{theorem}\label{theorem-carath-old}
Let $N \ge d+1$ and $F_1, \ldots, F_{N}$ be sets of points in $\rr^d$, considered as color classes.  If no colorful choice captures the origin, there are $N-d+1$ colors whose union does not capture the origin.
\end{theorem}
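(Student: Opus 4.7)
My plan is to argue by contradiction on the size of a maximal safe subset of the colors, using the Arocha--Holmsen strengthening (the $N = d+1$ case recalled just above) as the workhorse.  Let $S \subseteq [N]$ be maximal by inclusion with $0 \notin \conv\bigl(\bigcup_{i \in S} F_i\bigr)$, and write $T = [N]\setminus S$.  The goal is to show $|S| \ge N-d+1$, i.e.\ $|T| \le d-1$, so I assume for contradiction that $|T| \ge d$.

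Since $S$ is safe, strictly separate $\bigcup_{i \in S} F_i$ from the origin by a hyperplane with unit normal $v$, so that $v \cdot x \ge c > 0$ for every $x \in \bigcup_{i \in S} F_i$.  By the maximality of $S$, for each color $j \in T$ the set $F_j$ must contain at least one point $q_j$ with $v \cdot q_j \le 0$: otherwise a small perturbation of $v$ would still separate the origin from $\bigcup_{i \in S \cup \{j\}} F_i$, contradicting the maximality of $S$.  This produces at least $d$ witnesses, one per color in $T$, lying in the closed half-space $\{v \cdot x \le 0\}$.

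Using these witnesses, my plan is to construct a colorful choice of all $N$ colors whose convex hull contains the origin; this contradicts the hypothesis and forces $|T| \le d-1$.  Concretely, pick $d$ distinct colors $j_1, \dots, j_d \in T$ with their witnesses $q_{j_1}, \dots, q_{j_d}$, and search for a point $p \in \bigcup_{i \in S} F_i$ on the strictly positive side of $v$ such that $0 \in \conv\{p, q_{j_1}, \dots, q_{j_d}\}$; once such a $(d+1)$-simplex is in hand, extending it to a colorful $N$-tuple by picking an arbitrary representative from each remaining color preserves the origin in the convex hull.  The main obstacle is producing the simplex.  I would handle this by induction on the ambient dimension $d$: the base case $d=1$ is immediate, since then ``no colorful choice captures the origin'' forces every $F_i$ to lie in one common open ray; the inductive step reduces the existence of $p$ to a colorful Carathéodory-type problem in $\rr^{d-1}$ via a suitable projection from the origin, which the inductive hypothesis can dispatch to finish the contradiction.
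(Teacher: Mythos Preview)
Your approach via a maximal ``safe'' subfamily $S$ is different from the paper's argument, which instead takes the colorful choice $X_0$ whose convex hull is \emph{closest} to the origin, identifies the $\le d$ colors spanning the nearest face, shows by minimality that the remaining $N-d$ colors lie in the far half-space $H^+$, and then uses a parity argument on the colorful facets of an octahedron to rule out the bad case. The two strategies share the ``separate by a hyperplane and count colors on each side'' flavor, but the paper never tries to directly \emph{build} a capturing colorful simplex from scratch; it instead derives a contradiction to minimality.

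The gap in your plan is the simplex construction. After fixing the witnesses $q_{j_1},\ldots,q_{j_d}$ (one per color in $T$, each with $v\cdot q_{j_\ell}\le 0$), you assert that some $p\in\bigcup_{i\in S}F_i$ satisfies $0\in\conv\{p,q_{j_1},\ldots,q_{j_d}\}$. This need not hold for arbitrarily chosen witnesses: in $\rr^2$ with $v=(0,1)$, take $\bigcup_{i\in S}F_i\subset\{(\pm 1,1)\}$ and witnesses $q_1=(5,-1)$, $q_2=(6,-1)$; no triangle $\{(\pm 1,1),q_1,q_2\}$ contains the origin, yet nothing in your setup excludes this configuration. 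The only extra information available is that $0\in\conv\bigl(\bigcup_{i\in S}F_i\cup F_j\bigr)$ for each $j\in T$ \emph{separately}, which constrains each $F_j$ relative to $S$ but says nothing about the joint position of the chosen $q_{j_1},\ldots,q_{j_d}$. Your appeal to ``a suitable projection from the origin'' and the inductive hypothesis is not specified enough to bridge this: the inductive hypothesis concerns color classes in $\rr^{d-1}$ for which \emph{no} colorful choice captures the origin, and it is unclear what family in $\rr^{d-1}$ you would apply it to, or why its conclusion would produce the single point $p$ you need. As written, the crux of the argument---producing the capturing colorful simplex---is missing.
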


Even though the result above does not follow from the colorful Carath\'eodory theorem, it can be proved with exactly the same arguments from \cite{Arocha:2009ft}, as noted by Imre B\'ar\'any \cite{bar-com}.  It was pointed later to the author that the Theorem \ref{theorem-carath-old} also follows from the main result of \cite{holmsen2016intersection}, which is an extends the colorful Carath\'eodory theorem to matroids.  We include below the proof following the arguments of \cite{Arocha:2009ft}.

\begin{proof}[Proof of Theorem \ref{theorem-carath-old}] 
We may assume without loss of generality that the set of points $\bigcup_i F_i$ is in sufficiently general position; i.e., there is no affine hyperplane spanned by $d$ of the points that contains the origin. Among all colorful choices $X$, there must be one, $X_0$ which minimizes $\operatorname{dist}(\conv(X_0),0)$.  Let $p$ be the closest point of $\conv(X_0)$ to the origin.  As $X_0$ does not capture the origin, $p$ must be in a $(d-1)$-face of $X_0$.  In other words, there must be at most $d$ points of different colors $x_1, \ldots, x_d$ whose convex hull contains $p$.

Let $H$ be the hyperplane  orthogonal to the vector $p-0$, which passes through $p$.  Let $H^+$ be the closed half-space of $H$ that does not contain the origin.  Note that $x_1, \ldots, x_d \in H^+$.  The minimality of $\operatorname{dist}(\conv(X_0),0)$ implies that the $N-d$ colors not containing $x_1, \ldots, x_d$ are contained in $H^+$.  If one of these remaining $d$ colors is also contained in $H^+$, we would have $N-d+1$ colors separated from the origin, as desired.

Let's assume that this does not happen, and look for a contradiction.  Thus, we can find $u_1, \ldots, u_d \in \rr^d \setminus H^+$ such that $u_i$ is of the same color of $x_i$.  Let $\ell_1$ be a ray starting from the origin in the direction of $p$.  Let $y$ be a point of some color which is not represented among the $x_i$.  We consider $\ell_2$ a ray starting from the origin in the direction of $-y$.  Notice that $\ell_2$ is contained in $\rr^d\setminus H^+$ since $y \in H^+$.

The colorful facets spanned by $\{x_1, \ldots, x_d, u_1, \ldots, u_d\}=V$ are the linear image of a $(d-1)$-dimensional octahedron onto $\rr^d$, which in turn is a continuous image of the sphere $S^{d-1}$.  This implies that these colorful facets must intersect the topological line $\ell_1 \cup \ell_2$ in an even number of points.  The fact that $\ell_1 \cup \ell_2$ does not intersect the facets in subfaces follows from the general position assumption.

As $p$ is one point of intersection, let's see where the other points can be.  In $H^+$ we can have only $p$, by the construction of $V$.  In the segment $[0,p)$ we can have no point of intersection, or we would contradict the minimality of $\operatorname{dist}(\conv(X_0),0)$.  In $\ell_2$ we can have no point of intersection, as the colorful $d$-tuple sustaining that point would capture the origin once $y$ is included.  This leads to the desired contradiction. 
\end{proof}

Now assume we are given $N$ color classes and each captures the origin.  We would like to obtain a colorful choice which does more than simply capturing the origin.  For this, we use the notion of depth.

Given a finite set $X \subset \rr^d$ and a point $p \in \rr^d$, we define
\[
\depth (X, p) = \min \{|H \cap X| : p \in H, \ H \ \mbox{is a closed half-space}\}.
\]

This is commonly known as Tukey depth of half-space depth \cite{tukey1975mathematics, liu2006data}.  A direct application of the colorful Carath\'eodory theorem shows that given $N$ color classes in $\rr^d$, each capturing the origin, there is a colorful selection $X$ such that $\depth(X,0) \ge \left\lfloor \frac{N}{d+1} \right\rfloor$, which is optimal.  However, in many applications of the colorful Carath\'eodory the color classes have few points compared to the dimension.  For this situation we obtain the following result.

\begin{lemma}\label{theorem-carath}
Let $F_1, F_2, \ldots, F_N$ be $N$ families of $r$ points each in $\rr^d$, considered as color classes.  If $0 \in \conv (F_i)$ for each $i$, we can make a colorful choice $x_1 \in F_1, \ldots, x_N \in F_N$ such that for the set $X = \{ x_1, \ldots , x_N \}$ we have
\[
\depth(X,0) \ge \frac{N}{r} - \sqrt{\frac{d N \ln (Nr)}{2}}
\]
\end{lemma}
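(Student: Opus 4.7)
The lemma should follow from a direct application of the probabilistic method. I would define the random colorful choice by selecting each $x_i \in F_i$ uniformly and independently, and set $X = \{x_1, \ldots, x_N\}$. The key observation is that for any closed half-space $H$ containing $0$, the hypothesis $0 \in \conv(F_i)$ forces $|F_i \cap H| \geq 1$, since otherwise $F_i$ would lie in the open complement of $H$ and so would $\conv(F_i)$, contradicting $0 \in H \cap \conv(F_i)$. Consequently $\Pr[x_i \in H] \geq 1/r$ and $\mathbb{E}[|X \cap H|] \geq N/r$. Since $|X \cap H|$ is a sum of $N$ independent $\{0,1\}$-valued indicators, Hoeffding's inequality gives
\[
\Pr\left[|X \cap H| \leq \frac{N}{r} - \lambda\right] \leq \exp\left(-\frac{2\lambda^2}{N}\right)
\]
for every $\lambda > 0$.

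To close the argument I would union-bound over the finitely many ``essentially distinct'' half-spaces. The count $|X \cap H|$ only depends on $H \cap \bigcup_i F_i$, and the infimum defining $\depth(X,0)$ is attained by some half-space whose bounding hyperplane passes through the origin (otherwise one can shrink $H$ without increasing $|X \cap H|$) and through $d-1$ points of $\bigcup_i F_i$; after a generic perturbation of the points this gives at most $M \leq 2\binom{Nr}{d-1} < (Nr)^d$ candidate half-spaces. Choosing $\lambda = \sqrt{dN \ln(Nr)/2}$ makes $\exp(-2\lambda^2/N) = (Nr)^{-d}$, so the union bound produces total failure probability strictly less than $1$. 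Therefore some realization of $X$ satisfies the desired inequality simultaneously for every $H$, which gives $\depth(X,0) \geq N/r - \sqrt{dN\ln(Nr)/2}$.

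The main obstacle I expect is precisely this counting/perturbation step that reduces the depth minimization to a polynomial-size family of half-spaces; the expectation computation and Hoeffding application are routine once one notices the $1/r$ lower bound on $\Pr[x_i \in H]$ coming from the colorful Carath\'eodory-type hypothesis $0 \in \conv(F_i)$. A minor further subtlety is handling the boundary case $Nr$ small, where the bound $M < (Nr)^d$ may require separate verification; this is trivial but worth noting.
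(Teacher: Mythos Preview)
Your proposal is correct and essentially identical to the paper's proof: random uniform colorful choice, the observation $\Pr[x_i\in H]\ge 1/r$ from $0\in\conv(F_i)$, Hoeffding's inequality, and a union bound over at most $(Nr)^d$ half-spaces. The only cosmetic difference is that the paper packages the half-space counting as a separate lemma (Lemma~\ref{lemma-dimension}), stated for arbitrary point configurations without perturbation, whereas you sketch it inline via a general-position argument; both routes yield the same $(Nr)^d$ bound and the same choice of $\lambda$.
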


For the proof of Lemma \ref{theorem-carath} we need the following lemma, mentioned in \cite{Clarksonradon}, for instance.

\begin{lemma}\label{lemma-dimension}
Given a set $Y$ of $M$ points in $\rr^d$ and a point $c \in \rr^d$, there is a family of $M^d$ closed half-spaces containing $c$ such that, for any subset $Z \subset Y$, if each of the half-spaces contains at least $t$ points of $Z$, then we have $\depth (Z, c) \ge t$, for any $t$.
\end{lemma}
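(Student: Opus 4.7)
The plan is to build the family by enumerating the combinatorially distinct half-spaces whose boundary passes through $c$, via the arrangement of central hyperplanes normal to $y-c$ for $y\in Y$. The key point is that for a fixed finite $Y$ the subset $H\cap Y$ only takes polynomially many values as $H$ varies over closed half-spaces containing $c$, so it suffices to keep one representative per value.

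First I would reduce to half-spaces whose bounding hyperplane passes through $c$. Given any closed half-space $H$ with $c\in H$, translate its bounding hyperplane parallel to itself toward $c$ until it reaches $c$; the resulting half-space $H'$ satisfies $c\in\partial H'$ and $H'\subseteq H$, hence $H'\cap Y\subseteq H\cap Y$. So it is enough to exhibit a family $\mathcal{F}$ of half-spaces of the form $H_n:=\{x\in\rr^d:\langle x-c,n\rangle\ge 0\}$ such that for every unit vector $n$ some $H^*\in\mathcal{F}$ satisfies $H^*\cap Y\subseteq H_n\cap Y$; then for any $Z\subseteq Y$ we get $|H\cap Z|\ge|H'\cap Z|\ge|H^*\cap Z|$.

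Next, consider in $\rr^d$ the arrangement of the (at most) $M$ central hyperplanes $L_i=(y_i-c)^\perp$ (if $y_i=c$, the point lies in every half-space containing $c$ and may be ignored). A standard count gives at most $O(M^{d-1})\le M^d$ top-dimensional open convex cones. In each such cone $C$ pick a direction $n_C$ in the interior, so that no $y_i-c$ is orthogonal to $n_C$, and set $\mathcal{F}=\{H_{n_C}:C\text{ an open region}\}$. For an arbitrary unit vector $n$, choose an open region $C$ with $n\in\overline{C}$. If $y_i\in H_{n_C}$, then $\langle y_i-c,\cdot\rangle$ is strictly positive throughout $C$ (a connected open set disjoint from $L_i$), hence non-negative on $\overline{C}$, so $\langle y_i-c,n\rangle\ge 0$ and $y_i\in H_n$. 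Therefore $H_{n_C}\cap Y\subseteq H_n\cap Y$, and the claim follows: if every member of $\mathcal{F}$ captures at least $t$ points of $Z\subseteq Y$, so does every closed half-space $H$ with $c\in H$, giving $\depth(Z,c)\ge t$.

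The main obstacle is the careful handling of points $y_i$ lying on some $L_i$: if $n_C$ were picked on the boundary of a region, $H_{n_C}\cap Y$ could include extra tie points that a neighboring $H_n$ excludes, breaking the inclusion. Choosing $n_C$ in the interior of $C$ removes this issue and makes $H_{n_C}\cap Y$ the minimum value taken over the closure of $C$. The bound $O(M^{d-1})$ on the number of regions follows from a Zaslavsky-type formula or a direct induction on $M$; since the lemma only asks for $M^d$, any crude polynomial estimate suffices.
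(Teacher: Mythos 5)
Your proof is correct, and it establishes the same underlying fact as the paper --- that as $H$ ranges over closed half-spaces containing $c$, the trace $H\cap Y$ takes only polynomially many ``minimal'' values --- but by a different mechanism. The paper works in the primal: it tilts the bounding hyperplane, keeping $c$ on it, until it passes through $d-1$ affinely independent points of $Y$, and then counts the resulting canonical positions as $2\cdot 2^{d-1}\binom{M}{d-1}\le M^d$ (with a small exceptional case checked by hand). You instead pass to the dual space of normal directions, take the central arrangement of the hyperplanes $(y_i-c)^{\perp}$, and pick one representative normal in the interior of each full-dimensional cell; the monotonicity argument ($H_{n_C}\cap Y\subseteq H_n\cap Y$ for $n\in\overline{C}$) then handles all non-generic half-spaces at once. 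Your route is arguably cleaner, since it avoids the bookkeeping over which of the $d-1$ boundary points end up inside the half-space (the $2^{d-1}$ factor in the paper), and your preliminary reduction to hyperplanes through $c$ via the inclusion $H'\cap Y\subseteq H\cap Y$ is spelled out more carefully than in the paper, where it is asserted without justification. Both arguments share the same tiny caveat about degenerate parameters: your cell count ($2$ regions when $M=1$, versus the allowed $M^d=1$) fails only in a case that is irrelevant to the application, just as the paper's count fails only for $M=3$, $d=2$.
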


\begin{proof}
	We may assume that the affine span of $Y \cup \{c\}$ is $\rr^d$.  Otherwise, there is a hyperplane containing $Y \cup \{c\}$ and we can apply this Lemma for a lower dimension.  To check the $\depth (Z, c)$, it suffices to check half-spaces whose defining hyperplane goes through $c$.   Given a closed half-space $H^+$ such that $c \in H$, consider the set $\mathcal{M} = H^+\cap Y$.  If we show that there are at most $M^d$ possible subsets $\mathcal{M}$ we gan get this way, then the lemma would be proved.
	
	Notice that we may move continuously $H$ without losing $c$ until it contains $d-1$ points $y_1, \ldots, y_d \in Y$ such that $c, y_1, \ldots, y_{d-1}$ are affinely independent and without changing the set $M$ with the only possible exception of gaining a subset of $y_1, \ldots, y_{d-1}$.  Therefore, there are at most $2 \cdot 2^{d-1} \cdot {{M}\choose{d-1}}$ possibilities for $M$.  This number comes from counting the choice of the $(d-1)$-tuple of $Y$ to generate $H$ after tilting, the $2^{d-1}$ is the possible number of subsets of $y_1, \ldots, y_{d-1}$ that could be in $\mathcal{M}$ and the first factor $2$ comes from the two possible half-spaces we can be considering once we have $H$ in its final position.  This numbers is bounded above by $M^d$ except for the case $M = 3, d=2$, which can be checked by hand.
\end{proof}

The proof of Lemma \ref{theorem-carath} uses the probabilisitic method, by making the colorful choice at random.

\begin{proof}[Proof of Lemma \ref{theorem-carath}]
Let $\lambda > \sqrt{\frac{d N \ln (Nr)}{2}}$.  For each $F_i$ we are going to choose $x_i$ randomly and uniformly from its $r$ points in order to form our set $X$.

Given a closed half-space $H$ such that $0 \in H$, since $0 \in \conv (F_i)$, we have that 
\[
\mathbb{P} ( x_i \in H) \ge \frac{1}{r}.
\]

Thus, 

\[
\mathbb{E} ( | X \cap H |  ) \ge \frac{N}{r}.
\]

Moreover, $|X \cap H| = \sum_{i=1}^N \chi (x_i \in H)$.  This is a sum of independent indicators each of whose probability of success is at least $\frac{1}{r}$. In particular, Hoeffding's inequality implies that

\[
\mathbb{P} \left( |X \cap H| \le \frac{N}{r} - \lambda\right) \le \mathbb{P} ( |X \cap H| \le \mathbb{E}(|X \cap H|) - \lambda) \le \exp \left(\frac{-2\lambda^2}{N}\right).
\]
We can denote by $X_H$ the random variable which is the indicator of the event $|X \cap H| \le \frac{N}{r} - \lambda$.

Let $\mathcal{H}$ be the family of at most $(Nr)^d$ half-spaces from Lemma \ref{lemma-dimension} with $Y = \cup_i F_i$ and $c = 0$.  We have that

\[
\mathbb{E} \left( \sum_{H \in \mathcal{H}} X_H\right) =  \sum_{H \in \mathcal{H}}\mathbb{E}(X_H) \le \sum_{H \in \mathcal{H}} \exp \left(\frac{-2\lambda^2}{N}\right) \le \exp({d\ln (Nr)})\exp \left(\frac{-2\lambda^2}{N}\right) < 1,
\]
where the last inequality follows from the choice of $\lambda$.  Thus, there must be an instance of $X$ where $\sum_{H \in \mathcal{H}} X_H = 0$.  By the choice of $\mathcal{H}$, we have that $\depth(X,0) \ge \frac{N}{r}-\lambda$, as desired.
\end{proof}

One curious aspect of the colorful Carath\'eodory theorem is the case $r=2$.  In this instance, the color classes are simply the endpoints of $d+1$ segments each containing the origin.  To show the existence of a colorful choice as the theorem indicates, it suffices to use a non-trivial linear dependence of the $d+1$ directions of the segments.  The signs of the coefficients indicate which endpoint should be taken for each segment.  Likewise, Lemma \ref{theorem-carath} has a similar version when $r=2$.

\begin{corollary}
	Let $N, d$ be positive integers and $t =  \left\lceil \frac{N}{2}-\sqrt{\frac{d N \ln (2N)}{2}}\right\rceil -1$.  Given a set $S$ of $N$ vectors in $\rr^d$ there is an assignment $\Gamma$ of signs $(+)$ or $(-)$ to the element of $S$ such that, if any $t$ vectors are removed from $S$, the remaining vectors have a non-trivial linear dependence where the signs of all nonzero coefficients agree with $\Gamma$.
\end{corollary}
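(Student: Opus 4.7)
The plan is to apply Lemma \ref{theorem-carath} with $r=2$ using the color classes $F_v = \{v,-v\}$ for each $v \in S$. I may assume all $v \in S$ are nonzero, since zero vectors can be signed arbitrarily and always appear with coefficient $0$ in any linear dependence. Since $0$ is the midpoint of $\{v,-v\}$, the hypothesis $0\in \conv(F_v)$ is satisfied. The colorful choice produced by the lemma consists of a single point $x_v \in \{v,-v\}$ from each class, and I would define the sign assignment $\Gamma$ by setting $\Gamma(v) = +$ precisely when $x_v = v$. This way, the colorful set $X = \{x_v : v \in S\}$ is exactly $\Gamma$ applied to $S$, and
\[
\depth(X,0) \;\ge\; \frac{N}{2} - \sqrt{\frac{dN \ln(2N)}{2}}.
\]
Because depth is integer-valued, this bound holds with the right-hand side replaced by its ceiling, which is precisely $t+1$.

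The next step is to translate a depth bound into the desired robust linear dependence. Given any $C \subseteq S$ with $|C|\le t$, let $X' = X \setminus \{x_v : v \in C\}$. Any closed half-space through the origin originally contained at least $t+1$ points of $X$, so it contains at least one point of $X'$; hence $\depth(X',0) \ge 1$, which by the separation theorem is equivalent to $0 \in \conv(X')$. Thus there exist nonnegative coefficients $\lambda_v$, not all zero, with $\sum_{v \in S\setminus C} \lambda_v\, x_v = 0$. Substituting $x_v = \Gamma(v)\,v$ (reading $+$ and $-$ as $\pm 1$) turns this into a nontrivial linear dependence of the surviving vectors in which the coefficient of $v$ is $\Gamma(v)\lambda_v$, so every nonzero coefficient carries the sign prescribed by $\Gamma$.

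There is no real obstacle: the entire argument is a direct reduction. The only mild subtleties are the integrality rounding used to reach $t+1$, and the observation that depth drops by at most the number of points removed; both are routine. The conceptual content is simply recognizing that the $r=2$ case of Lemma \ref{theorem-carath} is precisely a statement about signed sums of vectors, mirroring the well-known observation that the $r=2$ case of the colorful Carathéodory theorem is equivalent to the existence of a signed linear dependence.
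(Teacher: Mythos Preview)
Your proof is correct and matches the paper's intended approach: the paper does not spell out a proof of this corollary, but presents it explicitly as the $r=2$ specialization of Lemma~\ref{theorem-carath} via the color classes $\{v,-v\}$, which is exactly what you do. The only imprecision is the remark that zero vectors ``always appear with coefficient $0$'' in a linear dependence; rather, you may \emph{choose} them to have coefficient $0$, which is all you need.
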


\section{Robust Tverberg results}\label{section-tverberg}

Let us restate Theorem \ref{theorem-tverberg} in the form which we aim to prove.  In order to show that the version below implies the one in the introduction it suffices to write $N = rt + p$ and notice that with $p = \tilde{O}(r^2\sqrt{td} + dr^3)$, the tolerance provided by the result below is at least $t$.

\begin{theorem}
Let $X$ be a set of $N$ points in $\rr^d$, and let

\[
t = \left\lceil \frac{N}{r} - \sqrt{\frac{(d+1)(r-1) N \ln (Nr)}{2}}\right\rceil - 1.
\]
Then, there is a partition of $X$ into $r$ parts $A_1, A_2, \ldots, A_r$ such that for any set $C$ of at most $t$ points, we have
\[
\bigcap_{j=1}^r  \conv(A_j \setminus C)  \neq \emptyset.
\]
\end{theorem}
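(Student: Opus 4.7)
The plan is to derive the theorem as a direct consequence of Lemma~\ref{theorem-carath} through Sarkaria's tensor trick, which reduces Tverberg-type statements to colorful Carath\'eodory-type statements in a larger ambient space.

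First I would perform the standard Sarkaria lift. For each point $x_i \in X$, set $\hat x_i = (x_i, 1) \in \rr^{d+1}$, and fix $r$ vectors $v_1, \ldots, v_r \in \rr^{r-1}$ in general position with $\sum_{j=1}^r v_j = 0$ (e.g.\ the vertices of a regular simplex centered at the origin). Define the color classes
\[
F_i = \{\hat x_i \otimes v_j : j = 1, \ldots, r\} \subset \rr^{(d+1)(r-1)}, \qquad i = 1, \ldots, N.
\]
Each $F_i$ has exactly $r$ points, and the uniform average $\frac{1}{r}\sum_j \hat x_i \otimes v_j = \hat x_i \otimes \bigl(\tfrac{1}{r}\sum_j v_j\bigr) = 0$ witnesses that $0 \in \conv(F_i)$.

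Next I would apply Lemma~\ref{theorem-carath} to these $N$ color classes in $\rr^D$ with $D = (d+1)(r-1)$. This yields a colorful choice $Y = \{\hat x_i \otimes v_{\sigma(i)}\}_{i=1}^N$ with
\[
\depth(Y, 0) \ge \frac{N}{r} - \sqrt{\frac{(d+1)(r-1) N \ln(Nr)}{2}} \ge t+1,
\]
by the choice of $t$. The function $\sigma : \{1,\ldots,N\} \to \{1,\ldots,r\}$ defines the partition $A_j = \{x_i : \sigma(i) = j\}$.

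Finally I would verify the tolerance claim by reversing Sarkaria's transformation. Given any set $C \subset X$ with $|C| \le t$, removing the points of $C$ from $X$ corresponds to deleting the $|C|$ corresponding points from the colorful set $Y$. Since $\depth(Y, 0) \ge t+1$, the resulting set $Y'$ still satisfies $0 \in \conv(Y')$: indeed, half-space depth at $0$ being $\ge t+1$ is equivalent to $0 \in \conv(Y \setminus C)$ for every $C$ of size $\le t$. Writing a convex combination $\sum_{i \notin C^*} \lambda_i \hat x_i \otimes v_{\sigma(i)} = 0$ (where $C^*$ indexes $C$) and grouping by color $j$, the relation $\sum_j w_j \otimes v_j = 0$ forces all $w_j = \sum_{i: \sigma(i)=j, i \notin C^*} \lambda_i \hat x_i$ to be equal. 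Reading off the last coordinate gives a common positive mass in each part, and reading off the first $d$ coordinates and normalizing exhibits a common point of $\bigcap_{j=1}^r \conv(A_j \setminus C)$.

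Since all the probabilistic and geometric work is hidden inside Lemma~\ref{theorem-carath}, I do not expect any real obstacle: the only non-routine point is the equivalence between half-space depth and robustness of convex-hull membership, which is classical (a point is in the convex hull iff it lies in every closed half-space meeting the set). The rest is bookkeeping and the standard Sarkaria inversion.
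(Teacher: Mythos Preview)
Your proposal is correct and is essentially identical to the paper's proof: both apply Sarkaria's tensor lift $x_i \mapsto (x_i,1)\otimes v_j$ to produce $N$ color classes of size $r$ in $\rr^{(d+1)(r-1)}$, invoke Lemma~\ref{theorem-carath} to obtain a colorful choice of depth at least $t+1$, and then unwind the tensor relation to recover the Tverberg partition with tolerance. The only cosmetic difference is that the paper explicitly takes the $v_j$ to be the vertices of a regular simplex, whereas you phrase the needed property as ``general position with $\sum v_j=0$''; either way the key fact used is that $\sum_j \beta_j v_j = 0$ forces all $\beta_j$ equal.
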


Since Tverberg's theorem can be deduced using the colorful Carath\'eodory theorem, it should come as no surprise that variations of the latter often translate to variations of both.  The result above is the consequence of applying Lemma \ref{theorem-carath}.

\begin{proof}
Let $S$ be a set of $N$ points in $\rr^d$, $S = \{ a_1, \ldots, a_N \}$.  Let $u_1, u_2, \ldots, u_r$ be the vertices of a regular simplex in $\rr^{r-1}$ centered at the origin.  Notice that any linear combination $\beta_1 u_1 + \ldots + \beta_r u_r$ gives the zero vector if and only if $\beta_1 = \ldots = \beta_r$.

We construct the points $b_i = (a_i, 1) \in \rr^{d+1}$ and for each $i$ consider the family $F_i = \{ b_i \otimes u_j : 1 \le j \le r\} \subset \rr^{(d+1)(r-1)}$, where $\otimes$ denotes the standard tensor product.  Notice that the barycenter of each $F_i$ is the origin in $\rr^{(d+1)(r-1)}$.  Thus, we may apply Lemma \ref{theorem-carath} and obtain a colorful choice $x_i = b_i \otimes u_{j_i}$ for each $i$ such that for the set $X = \{x_1, \ldots, x_N \}$ we have $\depth(X, 0) \ge t+1$.  If we remove any set $C$ of at most $t$ points, we still have $\depth(X\setminus C, 0) \ge 1$.  In other words, $0 \in \conv (X \setminus C)$.

Consider the sets $I_j = \{i: j_i = j \}$ and $A_j = \{u_i : i \in I_j \}$.  The sets $A_1, A_2, \ldots, A_r$ form the partition of $S$ induced by $X$.  Given any set $C \subset \{1,\ldots, N\}=[N]$ of at most $t$ indices, we know that there are coefficients $\{\alpha_i : i \in [N]\setminus C\}$ of a convex combination such that
\[
\sum_{i \in [N]\setminus C} \alpha_i b_i \otimes u_{j_i} = 0.
\] 
If we factor each $u_j$ we get
\[
\left( \sum_{i \in I_1 \setminus C} \alpha_i b_i \right) \otimes u_1 + \left( \sum_{i \in I_2 \setminus C} \alpha_i b_i \right) \otimes u_2 + \ldots + \left( \sum_{i \in I_r \setminus C} \alpha_i b_i \right) \otimes u_r = 0.
\]
The choice of $u_1, \ldots, u_r$ implies that
\[
\sum_{i \in I_1 \setminus C} \alpha_i b_i = \ldots = \sum_{i \in I_r \setminus C} \alpha_i b_i
\]
Using the fact that the last coordinate of each $b_i$ is equal to $1$, by a simple scaling we can assume that $\sum_{i \in I_j \setminus C} \alpha_i = 1$ for each $j$. If we look at the first $d$ coordinates, we have
\[
\sum_{i \in I_1 \setminus C} \alpha_i a_i = \ldots = \sum_{i \in I_r \setminus C} \alpha_i a_i,
\]
where each expression is a convex combination.  If we define $C' = \{u_i: i \in C\}$ the convex combinations above translate to
\[
\bigcap_{j=1}^r \conv( A_j \setminus C') \neq \emptyset,
\]
as desired.
\end{proof}

\section{Colored Tverberg with tolerance}\label{section-coloredtverberg}

One of the most important open problems around Tverberg's theorem is a long-standing conjecture by B\'ar\'any and Larman, aslo known as the colored Tververg theorem \cite{Barany:1992tx}.

\begin{conjecture}
Suppose $F_1, \ldots, F_{d+1}$ are families of $r$ points each in $\rr^d$, considered as color classes.  Then, there is a partition of them into $r$ sets $A_1, A_2, \ldots, A_r$ such that each part has exactly one point of each color and 
\[
\bigcap_{j=1}^r \conv (A_j) \neq \emptyset
\]
\end{conjecture}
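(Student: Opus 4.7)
The statement is the Bárány–Larman colored Tverberg conjecture, which remains open in its exact form for general $r$. The plan is to recast the conjecture as a constrained colorful Carathéodory problem, using the Sarkaria tensor trick from Section \ref{section-tverberg}. Write $F_k = \{p_1^{(k)}, \ldots, p_r^{(k)}\}$ for $k \in \{1, \ldots, d+1\}$, lift each point to $b_i^{(k)} = (p_i^{(k)}, 1) \in \rr^{d+1}$, and let $u_1, \ldots, u_r$ be the vertices of a regular simplex in $\rr^{r-1}$ with $\sum_j u_j = 0$. For each pair $(k,i)$, the set $G_i^{(k)} = \{b_i^{(k)} \otimes u_j : j \in \{1,\ldots,r\}\}$ has barycenter at the origin in $\rr^{(d+1)(r-1)}$. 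Unwinding the tensor argument from the proof of Theorem \ref{theorem-tverberg}, a valid colored Tverberg partition corresponds precisely to a system of permutations $\pi_1, \ldots, \pi_{d+1} \in \mathfrak{S}_r$ for which
\[
0 \in \conv\bigl\{b_i^{(k)} \otimes u_{\pi_k(i)} : k \in \{1,\ldots,d+1\},\ i \in \{1,\ldots,r\}\bigr\}.
\]

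The key step is to extract such a system of permutations. A direct application of the classical colorful Carathéodory theorem to the $r(d+1)$ families $G_i^{(k)}$, as in the proof of Theorem \ref{theorem-tverberg}, would produce an unconstrained assignment $j_{k,i} \in \{1,\ldots,r\}$; this yields a Tverberg partition of the $r(d+1)$ underlying points, but the parts need not contain exactly one point of each color. My proposal is to restrict the colorful Carathéodory selection to transversal choices, that is, to run the selection not over the product $\{1,\ldots,r\}^{(d+1) \times r}$ but over the join of chessboard complexes, one copy $\Delta_{r,r}$ per color class $F_k$.

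The main obstacle is precisely this bijectivity constraint: the independence exploited by the Hoeffding step of Lemma \ref{theorem-carath} is broken the moment each $\pi_k$ is forced to be a permutation, and no purely combinatorial argument is known to restore it with only $d+1$ colors and no slack. For $r$ prime, equivariant topology supplies the missing rigidity via the Blagojević–Matschke–Ziegler approach: one shows that there is no $\mathfrak{S}_r$-equivariant map from the $(d+1)$-fold deleted join of $r$-vertex simplices into $((\rr^d)^r)/\Delta$, by computing the primary obstruction in terms of the cohomology of chessboard complexes. I would first verify that the tensor lift above reproduces this computation, then attempt to relax the primality condition by a probabilistic sampling of the $\pi_k$ under negative-association inequalities for permutation statistics, to emulate Lemma \ref{theorem-carath} despite the lost independence. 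The deterministic failure of the topological obstruction at $r = 6$, however, strongly suggests that neither route alone will close the conjecture, and that a genuinely new ingredient — beyond primary obstruction theory on the one hand and beyond concentration on the other — is required.
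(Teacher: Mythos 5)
This statement is the B\'ar\'any--Larman colored Tverberg conjecture, which the paper records as an open \emph{conjecture} and does not prove; it only cites the known special cases ($d=2$, $r=2$, and $r+1$ prime) and some relaxations. So there is no proof in the paper to compare against, and your proposal, correctly, is not a proof either: it is a research program that ends by conceding that a new ingredient is needed. Your assessment of the status of the problem is accurate, and your Sarkaria-type reformulation --- lifting each $F_k$ to tensor points and observing that a colorful partition corresponds to a tuple of permutations $\pi_1,\ldots,\pi_{d+1}$ whose associated selection captures the origin --- is the standard and correct framing; it is essentially the same ``colored $r$-block / colorful choice as permutation'' machinery the paper deploys in Section~\ref{section-coloredtverberg} for the tolerance version, where the loss of independence is absorbed by having many more than $d+1$ color classes and by the derangement bound $p(r)$.

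Two points of caution. First, your diagnosis of the obstruction is right but the attribution is slightly off: the tight version of the conjecture is known when $r+1$ is a prime (Blagojevi\'c--Matschke--Ziegler), while the case ``$r$ prime'' via equivariant obstruction theory settles the Živaljević--Vrećica variant with $2r-1$ points per class, not the statement as written; and the $r=6$ failure you invoke is the counterexample to the \emph{topological} Tverberg conjecture, which does not by itself rule out the affine colored statement here. Second, the proposed fix --- negative-association inequalities for permutation statistics to rescue a concentration argument with only $d+1$ classes --- cannot work as stated: with $N=d+1$ classes the slack term $\sqrt{dN\ln(Nr)}$ in Lemma~\ref{theorem-carath} already exceeds $N/r$, so no concentration bound of this type can certify even depth $1$. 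The conjecture remains open, and your proposal should be read as a (reasonable) survey of why, not as progress toward a proof.
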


This conjecture has only been verified for $d=2$ by B\'ar\'any and Larman \cite{Barany:1992tx}, whose paper also included a proof for the case $r=2$ by Lov\'asz, and when $r+1$ is prime \cite{Blagojevic:2011vh, blago15}.  Some relaxations which give a positive result are if we are given $(r-1)d+1$ color classes instead of $d+1$ \cite{soberon2015equal}, where we can impose further conditions on the coefficients of the intersecting convex combinations; and if we allow each color class to have $2r-1$ points instead of $r$ \cite{Blagojevic:2014js}, although in this case the sets $A_i$ do not form a partition.

Given a family of sets $F_j$ of $r$ points each, considered color classes, we will say that a partition of their union into $r$ sets $A_1, \ldots, A_r$ is a colorful partition if each $A_i$ has exactly one point of each $F_j$.  We show a version of Tverberg's theorem with tolerance which holds for colored classes.  

\begin{theorem}\label{colored-tverberg-padre}
There is a constant $c_r$ such that the following hold.  Given $c_rt+\tilde{O}(r\sqrt{td}+rd)$ color classes of $r$ points each in $\rr^d$, there is a colorful partition of them into $r$ sets $A_1, \ldots, A_r$ such that, even if we remove any $t$ color classes, the convex hull of what is left in each $A_i$ still intersect.  The $\tilde{O}(\cdot)$ notation only hides polylogarithmic factors in $r,t,d$.  Moreover, $c_r \to \frac{e}{e-1}\sim 1.581...$ as $r \to \infty$.
\end{theorem}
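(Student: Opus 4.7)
The plan is to adapt the Sarkaria-tensor argument from the proof of Theorem~\ref{theorem-tverberg} to the colorful setting, using the observation that the colorful tolerance condition reduces to a color-level ``hitting'' statement whose single-class probability admits a sharp bound coming from derangement enumeration.

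For each color class $F_i=\{p_{i,1},\dots,p_{i,r}\}$ set $b_{i,j}=(p_{i,j},1)\in\rr^{d+1}$, and let $u_1,\dots,u_r$ be the vertices of a regular simplex in $\rr^{r-1}$ centered at the origin, so $\sum_k u_k=0$. A choice of permutations $\sigma_1,\dots,\sigma_N\in S_r$ induces the colorful partition $A_k=\{p_{i,\sigma_i^{-1}(k)}:i\in[N]\}$, and unpacking Sarkaria's tensor identity exactly as in the proof of Theorem~\ref{theorem-tverberg} gives, for any $C\subseteq[N]$ of removed classes with point-set $C'=\bigcup_{i\in C}F_i$,
\[
0\in \conv\bigl\{b_{i,j}\otimes u_{\sigma_i(j)}:i\in[N]\setminus C,\ j\in[r]\bigr\}\ \Longleftrightarrow\ \bigcap_{k=1}^{r}\conv(A_k\setminus C')\neq\emptyset.
\]
So it is enough to find permutations such that, for every closed half-space $H\ni 0$ in $\rr^{(d+1)(r-1)}$, the set $\{i\in[N]:\exists j,\ b_{i,j}\otimes u_{\sigma_i(j)}\in H\}$ has at least $t+1$ elements.

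I would pick each $\sigma_i$ independently and uniformly from $S_r$. For fixed $i$ and $H$ with normal realized as a matrix $N_H\in\rr^{(d+1)\times(r-1)}$, write $c_{j,k}=b_{i,j}^{\top}N_H u_k$; the event ``some $b_{i,j}\otimes u_{\sigma_i(j)}\in H$'' then becomes ``$\exists j:\sigma_i(j)\in T_j$'' where $T_j=\{k:c_{j,k}\le 0\}$. Because $\sum_k u_k=0$ forces every row $\sum_k c_{j,k}=0$, each $T_j$ is nonempty. The key estimate I would prove is the permutation-avoidance bound
\[
\mathbb{P}_{\sigma}\bigl(\exists j:\sigma(j)\in T_j\bigr)\ \ge\ 1-\frac{D_r}{r!}\ =\ \frac{1}{c_r},
\]
valid for any nonempty $T_1,\dots,T_r\subseteq[r]$, where $D_r$ is the number of derangements of $[r]$ and $c_r=r!/(r!-D_r)$ satisfies $c_2=2$, $c_r\le 1.6$ for $r\ge 3$, and $c_r\to e/(e-1)$ as $r\to\infty$. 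Equivalently, one needs the permanent inequality $\operatorname{perm}(A)\le D_r$ for every $r\times r$ $\{0,1\}$-matrix with at least one zero per row: when the zeros contain a system of distinct representatives given by a permutation matrix $P$, then entrywise $A\le J-P$ and monotonicity of the permanent yields $\operatorname{perm}(A)\le\operatorname{perm}(J-P)=D_r$; when no such SDR exists, a Hall-type obstruction forces even fewer admissible permutations, so the permanent is strictly smaller.

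With the single-class bound in hand, set $X_i^H=\mathds{1}[\exists j:b_{i,j}\otimes u_{\sigma_i(j)}\in H]$. These indicators are independent with $\mathbb{E}[X_i^H]\ge 1/c_r$, so Hoeffding gives
\[
\mathbb{P}\Bigl(\textstyle\sum_i X_i^H\le N/c_r-\lambda\Bigr)\ \le\ \exp(-2\lambda^2/N).
\]
Applying Lemma~\ref{lemma-dimension} to the fixed set $\{b_{i,j}\otimes u_k:i\in[N],\ j,k\in[r]\}$ of $Nr^2$ points in $\rr^{(d+1)(r-1)}$ yields a family of at most $(Nr^2)^{(d+1)(r-1)}$ test half-spaces. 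A union bound with $\lambda^2$ of order $(d+1)(r-1)\,N\log(Nr)=\tilde O(rdN)$ produces permutations with $\min_H\sum_i X_i^H\ge N/c_r-\tilde O(\sqrt{rdN})$; solving $N/c_r-\tilde O(\sqrt{rdN})\ge t+1$ for $N$ gives $N=c_r t+\tilde O(r\sqrt{td}+rd)$, matching the statement. The main obstacle is the permutation-avoidance estimate itself: a naive argument that only tracks whether $\sum_j c_{j,\sigma(j)}\le 0$ yields only a $1/r$ probability (corresponding to $c_r=r$, no improvement over Theorem~\ref{theorem-tverberg}); extracting the sharp $(e-1)/e$ limit requires the full permanent inequality (or an equivalent direct inclusion-exclusion / derangement count), and this is the technical heart of the proof. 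The remainder is a routine colorful adaptation of the probabilistic template used in Lemma~\ref{theorem-carath} and the proof of Theorem~\ref{theorem-tverberg}.
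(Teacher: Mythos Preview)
Your proposal is correct and follows essentially the same route as the paper: Sarkaria's tensor lift turns each color class into what the paper calls a \emph{colored $r$-block}, a random colorful partition corresponds to independent uniform permutations, the derangement bound $\mathbb{P}(\exists j:\sigma(j)\in T_j)\ge 1-D_r/r!$ is exactly the paper's key lemma, and Hoeffding plus Lemma~\ref{lemma-dimension} on the $Nr^2$ tensor points finish the argument. The one point to tighten is your no-SDR case: the claim that the permanent is then ``strictly smaller'' is false (for $r=3$ with zeros at $(1,1),(2,1),(3,2)$ one computes $\operatorname{perm}(A)=2=D_3$), so your Hall hand-wave does not stand as written. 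The clean fix is to first pick a single $v_j\in T_j$ per row and use monotonicity of the permanent to reduce to exactly one zero per row; the remaining case of repeated $v_j$'s is precisely what the paper handles by its swapping lemma (replace a repeated forbidden value by an unused one and verify that the number of avoiding permutations can only increase). With that patch your argument and the paper's coincide.
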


It should be noted that the adaptation of Sarkaria's methods described in \cite{soberon2015equal} in combination of Theorem \ref{theorem-tverberg} does give a colored version similar to the one above.  This would have much stronger conditions on the coefficients of the convex combinations that give the point of intersection, but would require $rt + o(t)$ color classes instead of $c_r t + o(t)$.  The value $c_r$ also satisifes the bounds $c_2 \le 2$ and $c_r \le 1.6$ for $r \ge 3$.  To prove Theorem \ref{colored-tverberg-padre}, we need the following lemmatta.

\begin{lemma}
Consider $[r] = \{1,2, \ldots, r\}$.  Suppose that we are given $r$ forbidden values $v_1, v_2, \ldots, v_r$ not necessarily distinct in $[r]$.  The probability that a random permutation $\sigma$ satisfies $\sigma (i) \neq v_i$ for all $i$ is maximized when all the $v_i$ are different. 
\end{lemma}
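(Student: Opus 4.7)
My plan is to proceed by a one-step monotonicity: I will show that whenever some value is repeated among the $v_i$, one can replace a repeated occurrence by a value missing from $v$ without decreasing the count
\[
N(v) \;:=\; \bigl|\{\sigma \in S_r : \sigma(i) \neq v_i \text{ for all } i\}\bigr|.
\]
Iterating strictly increases the number of distinct entries of $v$ at each step, so after at most $r$ reductions one reaches a permutation-valued $v$ whose $N$ is at least that of the original, which gives the lemma.

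To set this up, assume not all $v_i$ are distinct. Then some value $a \in [r]$ appears at least twice in $v$ (WLOG at positions $1$ and $2$) and some value $w \in [r]$ is absent. Let $v' = (a, w, v_3, \ldots, v_r)$. Define
\[
P \;=\; \{\sigma \in S_r : \sigma(1) \neq a \text{ and } \sigma(k) \neq v_k \text{ for } k \geq 3\},
\]
and split $P$ by the value of $\sigma(2)$, so that $N(v) = |P| - |A|$ and $N(v') = |P| - |B|$, where $A = \{\sigma \in P : \sigma(2) = a\}$ and $B = \{\sigma \in P : \sigma(2) = w\}$. The inequality $N(v') \geq N(v)$ therefore reduces to producing an injection $B \hookrightarrow A$.

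For the injection, given $\sigma \in B$ I will set $\tau = (a\,w) \circ \sigma$, where $(a\,w)$ denotes the transposition swapping the values $a$ and $w$ in the codomain. The key observation is that since $\sigma(2) = w$ and $\sigma$ is a permutation, the value $w$ appears only at position $2$ in $\sigma$; hence for $k \geq 3$, either $\sigma(k) = a$ (so $\tau(k) = w \neq v_k$, using that $w$ is absent from $v$) or $\sigma(k) \notin \{a, w\}$ (so $\tau(k) = \sigma(k) \neq v_k$). One checks similarly that $\tau(1) = \sigma(1) \neq a$, since $\sigma(1) \notin \{a, w\}$, and that $\tau(2) = a$, so $\tau \in A$; the map is injective because $(a\,w)$ is an involution. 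The main subtle point is verifying that the swap does not create new conflicts on the coordinates $k \geq 3$, which is exactly where the absence of $w$ from $v$ is used; once that is in place the rest of the argument is mechanical.
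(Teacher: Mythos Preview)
Your proof is correct and follows essentially the same approach as the paper: both arguments replace one repeated forbidden value by a missing one and use the value-transposition $(a\,w)$ to show the count of avoiding permutations does not decrease. Your presentation, which isolates the common constraints in $P$ and reduces to an injection $B\hookrightarrow A$ between the ``bad'' sets, is a slightly cleaner packaging of the same idea the paper carries out via a bijection on all of $S_r$.
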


\begin{proof}
Let $S_r$ be the set of all permutations $\sigma: [r] \to [r]$.  For each permutation $\sigma \in S_r$, let $n(\sigma)$ be the number of indices $i$ such that $\sigma (i) = v_i$.  If the forbidden values are not all different, we can assume without loss of generality that $v_1 = v_2$.  Thus, there is at least one number $\tau \in [r]$ which is not forbidden.  Consider a new list of forbidden values $(\tau, v_2, \ldots, v_r)$, and let $m(\sigma)$ be the number of indices $i$ such that $\sigma(i)$ is the $i$-the element of the new list.  Let $F: S_r \to S_r$ be the bijection such that $F(\sigma) = \sigma$ if $\sigma (1) \neq v_1$ and $\sigma(1) \neq \tau$, and $F$ switches the values of $\sigma^{-1} (v_1$) and $\sigma^{-1}(\tau)$ otherwise.  If $n(\sigma) = 0$, then $m(F(\sigma)) = 0$, but there may be permutations with $n(\sigma) \neq 0$ and $m(F(\sigma)) = 0$.  Thus, if we repeat this process until every pair of forbidden values is different, the number of permutations $\sigma$ with $n (\sigma) = 0$ only increases, as desired.
\end{proof}

It is known that the probability $p(r)$ that a random permutation of $[r]$ has at least one fixed point tends to $1- \frac{1}{e}$ as $r \to \infty$.  Thus, the lemma above implies that, given at least one forbidden value for each element in $[r]$, the probability that a random permutation of $[r]$ hits at least one of them is at least $p(r)$.

We say that a family of $r$ sets $F_1, \ldots, F_r$ in $\rr^d$ is a colored $r$-block if 
\begin{itemize}
	\item each $F_i$ has $r$ points,
	\item each $F_i$ captures the origin and
	\item all points in the $r$-block are colored with one of $r$ possible colors in such a way that each $F_i$ has exactly one point of each color.
\end{itemize}
Given a colored $r$-block, we say that a subset of its points is a colorful choice if it has exactly one point of each color and exactly one point of each $F_i$.  Given a family of colored $r$-blocks, we say a subset of their union is a colorful choice if its restriction to each $r$-block is also a colorful choice.

\begin{corollary}
Suppose that we are given a colored $r$-block in $\rr^d$ and a closed half-space $H$ containing the origin.  If we pick a random colorful choice, the probability that we have at least one point in $H$ is at least $p(r)$.  
\end{corollary}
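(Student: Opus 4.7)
The plan is to reduce the statement to a direct application of the preceding lemma on random permutations. First I would identify a uniformly random colorful choice with a uniformly random permutation $\sigma:[r]\to[r]$: label the sets of the block as $F_1,\dots,F_r$ and the colors as $1,\dots,r$, so that $\sigma(i)$ records the color of the point picked from $F_i$. Because each $F_i$ has exactly one point of each color and each color appears exactly once in the colorful choice, every bijection $\sigma$ arises from exactly one colorful choice, so this identification pushes the uniform measure on colorful choices to the uniform measure on $S_r$.

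Next I would locate, inside each $F_i$, a witness point that lies in $H$. Since $F_i$ captures the origin and $H$ is a closed half-space containing the origin, $F_i\cap H$ is nonempty; fix any $v_i\in[r]$ equal to the color of some point of $F_i\cap H$ (breaking ties arbitrarily if several such colors exist). With these forbidden values in hand, observe the key implication: whenever $\sigma(i)=v_i$ for some $i$, the point chosen from $F_i$ is by construction a point of $F_i\cap H$. Consequently the event ``the colorful choice contains at least one point of $H$'' contains the event ``$\sigma(i)=v_i$ for some $i$''.

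By the remark following the previous lemma, for any list $v_1,\dots,v_r$ of forbidden values a uniformly random $\sigma\in S_r$ satisfies $\sigma(i)=v_i$ for at least one $i$ with probability at least $p(r)$; the extremal case is when the $v_i$ are all distinct, so that $i\mapsto v_i$ is itself a permutation and the complementary event becomes, after conjugation, an honest derangement, whose probability is $1-p(r)$. Combining this bound with the containment of events noted above yields the desired $p(r)$ lower bound. The only point that might require a second glance is the identification of colorful choices with permutations and the resulting translation of the geometric event "some selected point lies in $H$" into the combinatorial event "$\sigma$ hits some forbidden value"; once this reduction is set up, no further geometry is used beyond the hypothesis that each $F_i$ captures the origin.
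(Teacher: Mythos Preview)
Your proof is correct and follows essentially the same approach as the paper: identify colorful choices with permutations of $[r]$, use the hypothesis that each $F_i$ captures the origin to produce at least one forbidden color $v_i$ per index, and then invoke the preceding lemma (and the remark after it) to conclude that a random permutation hits some forbidden value with probability at least $p(r)$. Your write-up is more explicit about the containment of events and the bijection with $S_r$, but the argument is the same.
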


\begin{proof}
Let $F_1, \ldots, F_r$ be the sets in the block.  It suffice to notice that each colorful choice corresponds to a permutation $\sigma: [r] \to [r]$ so that $\sigma (i) = j$ if and only if the point chosen from $F_i$ is of color $j$.  Then, if we forbid all points in $H$ (at least one forbidden point from each $F_i$), we have reduced the problem to the lemma above.
\end{proof}

\begin{theorem}\label{theorem-block-caratheodory}
Suppose we are given $N$ colored $r$-blocks in $\rr^d$, all using the same $r$ colors.  Then, there is a colorful choice $M$ such that each half-space containing the origin contains points of at least $t$ different $r$-blocks, as long as
\[
t \le p(r) N - \sqrt{\frac{dN\ln (Nr^2)}{2}}.
\]
\end{theorem}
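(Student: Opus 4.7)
The plan is to mimic the proof of Lemma \ref{theorem-carath} almost verbatim, with the preceding Corollary taking the role of the trivial single-color bound $\mathbb{P}(x_i \in H) \ge 1/r$ used in that argument. For each of the $N$ colored $r$-blocks we independently sample a uniformly random colorful choice, and let $M$ be the union of these $N$ selections. The goal is to show that, with positive probability, every closed half-space containing the origin is hit by points of at least $t$ distinct blocks.

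First I would fix a closed half-space $H$ containing $0$ and let $Y_i$ be the indicator that the random colorful choice from the $i$-th block contains at least one point of $H$. Since we sample the blocks independently, the $Y_1, \ldots, Y_N$ are independent $\{0,1\}$-valued random variables, and the Corollary stated just before the theorem gives $\mathbb{E}(Y_i) \ge p(r)$. Consequently Hoeffding's inequality yields, for any $\lambda>0$,
\[
\mathbb{P}\Bigl(\textstyle\sum_{i=1}^{N} Y_i \le p(r)N - \lambda\Bigr) \le \exp\!\left(\frac{-2\lambda^2}{N}\right).
\]

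Next I would discretize the set of half-spaces. Apply Lemma \ref{lemma-dimension} to the fixed point set $Y = \bigcup_{i,j} F_{i,j}$ (of cardinality at most $Nr^2$) with $c=0$, producing a family $\mathcal{H}$ of at most $(Nr^2)^d$ half-spaces through the origin. The crucial observation is that the guarantee of Lemma \ref{lemma-dimension}, although phrased for depth, is really a monotonicity statement about the sets $H \cap Y$: for any half-space $H'\ni 0$ there is $H \in \mathcal{H}$ with $H \cap Y \subseteq H' \cap Y$, so in particular the indices of blocks with a chosen point in $H$ are a subset of the corresponding indices for $H'$. Hence checking the block-count condition on $\mathcal{H}$ forces it to hold for every half-space through the origin.

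The conclusion then follows by a union bound over $\mathcal{H}$: choosing $\lambda$ slightly larger than $\sqrt{dN\ln(Nr^2)/2}$ gives
\[
\sum_{H \in \mathcal{H}} \mathbb{P}\Bigl(\textstyle\sum_i Y_i(H) \le p(r)N - \lambda\Bigr) \le (Nr^2)^d \exp\!\left(\frac{-2\lambda^2}{N}\right) < 1,
\]
so there exists a realization of $M$ in which every $H \in \mathcal{H}$ (and therefore every half-space containing $0$) is hit by at least $\lceil p(r)N - \lambda \rceil \ge t$ distinct blocks. I expect the main subtlety, rather than a real obstacle, to be the last step of the preceding paragraph: making sure Lemma \ref{lemma-dimension} can indeed be invoked in a way that controls block counts and not merely point counts, which hinges on the subset-monotonicity of $H \mapsto H \cap Y$ implicit in its proof. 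Everything else is a direct transcription of the Hoeffding plus union-bound template already used for Lemma \ref{theorem-carath}.
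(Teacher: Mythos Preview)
Your proposal is correct and follows essentially the same argument as the paper: independently sample a random colorful choice in each block, apply the Corollary to get $\mathbb{E}(Y_i)\ge p(r)$, use Hoeffding, and then union-bound over the $(Nr^2)^d$ half-spaces coming from Lemma~\ref{lemma-dimension}. Your explicit remark that Lemma~\ref{lemma-dimension} suffices because of the subset-monotonicity of $H\mapsto H\cap Y$ (so that block counts, not just point counts, are controlled) is a point the paper glosses over, but it is exactly the right justification.
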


\begin{proof}
Let $\lambda > \sqrt{\frac{dN\ln (Nr^2)}{2}}$ and $H$ be a closed half-space containing the origin.  For each colored $r$-block $B$ we choose independently at random a colored choice $X_B$, their union is a random colored choice $X$ for the whole family.  Let $x_B = \chi (X_B \cap H \neq \emptyset)$.  Then
\begin{itemize}
	\item $\mathbb{E}(x_B) \ge p(r)$,
	\item $\sum_B x_B$ is the number of colored $r$-blocks whose colorful choice has at least one point in $H$, so $\mathbb{E}(\sum_B x_B) \ge p(r) N$,
	\item for $B \neq B'$, $x_B$ and $x_{B'}$ are independent.
\end{itemize}
Thus, after applying Hoeffding's inequality we get
\[
\mathbb{P}\left(\sum_B x_B \le p(r) N - \lambda \right) \le \exp \left( \frac{-2\lambda^2}{N}\right).
\]
Notice that among all the colored $r$-blocks we have $Nr^2$ points.  Thus, to find the depth of $0$ from a subset it is sufficient to check a family $\mathcal{H}$ of at most $(Nr^2)^d$ half-spaces.  If we call a half-space $H$ \textit{bad} if less than $p(r) N - \lambda$ of the $r$-blocks have a point in it, the probability that there is at least one bad half-space is at most
\[
\sum_{H \in \mathcal{H}} \exp \left( \frac{-2\lambda^2}{N}\right) = (Nr^2)^d \exp \left( \frac{-2\lambda^2}{N}\right) = \exp \left( d \ln(Nr^2) -\frac{2\lambda^2}{N}\right) < 1.
\]
Thus, there is at least one colorful choice with no bad hyperplanes.

\end{proof}

Now we are ready to prove Theorem \ref{colored-tverberg-padre}.

\begin{proof}[Proof of Theorem \ref{colored-tverberg-padre}]
We will show that given $N$ color classes of $r$ points each in $\rr^d$, we can find a colorful partition of them into $r$ sets $A_1, \ldots, A_r$ so that, even if we remove any $r$ color classes, the convex hulls of what is left in each $A_i$ still intersect as as long as
\[
t \le  p(r) N - \sqrt{\frac{(d+1)(r-1)N\ln (Nr^2)}{2}}-1.
\]
This implies the result in the theorem with $c_r = \frac{1}{p(r)}$.

Let $n = (r-1)(d+1)$.  If we apply the Sarkaria transformation to the original set of points, notice that each color class of $r$ points is represented by $r^2$ points in $\rr^n$.  Moreover, we can introduce $r$ colors in $\rr^n$ and paint the point of the form $(x,1) \otimes u_i$ of color $i$.  This turns the $r^2$ points in $\rr^n$ into a colored $r$-block.  Notice also that for a color class in $\rr^d$, assigning each of its points to a different $A_i$ corresponds to a colorful choice in its corresponding colored $r$-block in $\rr^n$.  Thus, we can apply Theorem \ref{theorem-block-caratheodory} to finish the proof.
\end{proof}

\section{Reay partitions with tolerance}\label{section-reay}

Let $R^*=R^*(d,r,k)$ be the smallest integer such that among any $R^*$ points in $\rr^d$, there is a partition of them into $r$ parts such that the convex hulls of any $k$ parts intersect.  We call such partitions \textit{Reay partitions}.  Tverberg's theorem asserts that $R^*(d,r,r) = (d+1)(r-1)+1$.  However, there are no cases known for which $R^*(d,r,k) < R^*(d,r,r)$.  In 1979 Reay conjecture that $R^*(d,r,k) = (d+1)(r-1)+1$ for any $r\ge k \ge 2$ \cite{reay1979several}.  Reay's conjecture remains open.  There have been several advances improving lower bounds for $R^*$ \cite{PS16, frickreu16}.  

It is natural to extend Reay partitions to the setting with tolerance.  We define the integer $R=R(t,d,r,k)$ as the smallest $R$ such that among any $R$ points in $\rr^d$, there is a partition $\mathcal{P}$ of them into $r$ parts with the following property.  For any set $C$ of at most $t$ points and any $k$-tuple $\mathcal{K}$ of parts of $\mathcal{P}$, even if the points of $C$ are removed, the convex hulls of what is left in each part of $\mathcal{K}$ intersect.

\begin{theorem}\label{theorem-reay}
For any positive integers $t,d,r,k$ with $r \ge k$, we have that
\[
R(t,d,r,k) = rt + \tilde{O}\left(r\sqrt{dkrt}+r^2 dk  \right).
\]
\end{theorem}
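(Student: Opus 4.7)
The plan is to run the Sarkaria-based probabilistic argument of Theorem \ref{theorem-tverberg}, but one $k$-subset at a time. Assign each of the $N$ points independently and uniformly to one of the $r$ parts $A_1, \ldots, A_r$. For each $k$-subset $I = \{s_1, \ldots, s_k\} \subset [r]$, the Reay-with-tolerance condition restricted to $I$ is exactly a Tverberg-with-tolerance condition on $k$ parts, so it can be encoded by a Sarkaria transformation into $\rr^{(d+1)(k-1)}$ rather than into the larger space $\rr^{(d+1)(r-1)}$. Replacing $(d+1)(r-1)$ by $(d+1)(k-1)$ in the ambient Sarkaria dimension is exactly what converts the $r^2\sqrt{td}$ of Theorem \ref{theorem-tverberg} into the improved $r\sqrt{dkrt}$ of Theorem \ref{theorem-reay}.

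For each $k$-subset $I$, fix $u_1^{(I)}, \ldots, u_k^{(I)}$ as vertices of a regular simplex in $\rr^{k-1}$ centered at the origin, and set $b_i = (a_i, 1) \in \rr^{d+1}$. If $a_i$ is assigned to $A_{s_j}$ for some $j \in [k]$, have it contribute $b_i \otimes u_j^{(I)}$ to an auxiliary random set $X_I \subset \rr^{(d+1)(k-1)}$; if $a_i$ is assigned to a part outside $I$, it contributes nothing. Exactly as in the proof of Theorem \ref{theorem-tverberg}, the bound $\depth(X_I, 0) \ge t+1$ translates into $\bigcap_{j \in I} \conv(A_{s_j} \setminus C) \neq \emptyset$ for every $|C| \le t$. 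Now fix a closed half-space $H \ni 0$ in $\rr^{(d+1)(k-1)}$: since the barycenter of $\{b_i \otimes u_j^{(I)}\}_{j=1}^k$ is the origin, at least one of these $k$ vectors lies in $H$, so each point $a_i$ contributes to $X_I \cap H$ with probability at least $1/r$. Hoeffding then gives $\mathbb{P}(|X_I \cap H| \le N/r - \lambda) \le \exp(-2\lambda^2/N)$, and I union bound over the at most $(Nk)^{(d+1)(k-1)}$ half-spaces provided by Lemma \ref{lemma-dimension} and the $\binom{r}{k} \le r^k$ choices of $I$.

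Choosing $\lambda$ with
\[
\lambda^2 \ge \frac{N}{2}\bigl[(d+1)(k-1)\ln(Nk) + k\ln r\bigr]
\]
makes the union-bound failure probability less than $1$, so some partition satisfies $\depth(X_I, 0) \ge \lceil N/r - \lambda \rceil$ for every $I$ simultaneously. Setting $t+1 = \lceil N/r - \lambda \rceil$ and solving for $N$ as in Section \ref{section-tverberg} yields $N = rt + \tilde{O}(r\sqrt{dkrt} + r^2 dk)$, since the $k \ln r$ term is dominated by the $(d+1)(k-1)\ln(Nk)$ contribution and so the union bound over $k$-subsets costs only a polylogarithmic factor. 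The main subtlety is that the $\binom{r}{k}$ Sarkaria embeddings all view restrictions of the same shared random partition, so their success events are coupled rather than independent, forcing the union bound above; once it is in place, the improvement over Theorem \ref{theorem-tverberg} comes purely from the ambient dimension of each transformation dropping from $(d+1)(r-1)$ to $(d+1)(k-1)$.
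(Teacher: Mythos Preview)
Your proposal is correct and follows essentially the same route as the paper: assign points to parts uniformly at random, for each $k$-subset $I$ apply the Sarkaria transformation into $\rr^{(d+1)(k-1)}$ (with points assigned outside $I$ contributing nothing), use Hoeffding on the per-point indicators---each with success probability at least $1/r$---then union-bound over the relevant half-spaces and over all $\binom{r}{k}$ choices of $I$. The only cosmetic differences are in the bookkeeping: you bound the number of half-spaces by $(Nk)^{(d+1)(k-1)}$ whereas the paper uses $(Mr)^{(d+1)(k-1)}$, and you bound $\ln\binom{r}{k}$ by $k\ln r$ rather than $k\ln(r/k)$; neither affects the $\tilde O$ outcome.
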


Moreover, an application of Helly's theorem show that $N(t,d,r) = R(t,d,r,d+1)$, so the bounds above improve Theorem \ref{theorem-tverberg} if $r$ is large.

\begin{proof}
	Suppose we are given $M$ points in $\rr^d$.  We can color them randomly and independently with one of $r$ colors.  Let us bound the probability that a given $k$-tuple of colors can be separated if we remove $t$ points.  In other words, we want to bound from above the probability that there is a set of at most $t$ points such that after removing them, the convex hulls of what is left in each part of the $k$-tuple do not intersect.
	
	We apply Sarkaria's trick, but use only $k$ vectors $u_1, \ldots, u_k$ in $\rr^{k-1}$ instead of $r$ vectors in $\rr^{r-1}$.  Every point $a_i \in \rr^d$ is represented by the set $F_i$ made by the $k$ points of the form $(a_i, 1) \otimes u_j$ for $1 \le j \le k$.  If our chosen $k$-tuples consists of the first $k$ colors, then assigning a color to $a_i$ corresponds to possibly choosing an element of $F_i$ as follows:
	
	\begin{itemize}
		\item if $a_i$ is colored with color $j$ and $j \le k$, then we choose $(a_i, 1) \otimes u_j$;
		\item if $a_i$ is colored with some color $j \ge k+1$, we don't make a choice from $F_i$.
	\end{itemize}
	  This turns our partition in $\rr^d$ into a colorful choice $X$ in $\rr^{(k-1)(d+1)}$, where some classes $F_i$ may not have an element selected.
	
	The tolerance with which the convex hulls of our $k$-tuple intersect is equal to the $\depth (X, 0) -1$.  With similar computation as those for the proof of Theorem $1$, we get that for any $\lambda > 0$,
	\[
	\mathbb{P} \left(\depth (X,0) < \frac{M}{r}- \lambda \right) \le \exp\left((d+1)(k-1)\ln (Mr) - \frac{2\lambda^2}{M}\right).
	\]
	So, the probability that there is a $k$-tuple with tolerance smaller than $\frac{M}{r}-\lambda-1$ is at most
	\[
	{{r}\choose{k}}\exp\left((d+1)(k-1)\ln (Mr) - \frac{2\lambda^2}{M}\right)
	\]
	
	Thus, by choosing $\lambda > \sqrt{\frac{1}{2}\left[(d+1)(k-1)\ln (Mr) + \ln{{r}\choose{k}}\right]}$ we know there is an instance where this does not happen.  If we check how large $M$ must be to guarantee that the given tolerance is at least $t$, we get the asymptotic bound of the theorem.  One should note that $\ln {{r}\choose{k}} \le k \ln \left( \frac{r}{k }\right)= \tilde O (k)$, which reduces the number of terms we get in the final expression.
\end{proof}

We should stress that the lower bound mentioned in the introduction, $N(t,d,r) \ge rt+\frac{rd}{2}$, extends to Reay's setting.  This is because the construction has the property that for every partition into $r$ parts, there is a set of $t$ points such that their removal makes one of the parts to separate by a hyperplane form the rest of the set.  In other words,
\[
N(t,d,r) = R(t,d,r,r) \ge R(t,d,r,k) \ge R(t,d,r,2) \ge rt+\frac{rd}{2}.
\]

\section{Remarks}\label{section-remarks}

It would be interesting to see if in Theorem \ref{colored-tverberg-padre}, the constant $c_r \sim 1.582...$ could be replaced by $1$.  Namely, determining if the following result holds.

\begin{conjecture}\label{conjecture-fuerte}
Let $r, d$ be fixed positive integers.  There is an integer $M=M(t,d,r)= t(1+o(1))$ such that, given any $M$ families of $r$ points each in $\rr^d$, there is a colorful partition of them $A_1, \ldots, A_r$ such that for any family $C$ of at most $t$ colors
\[
\bigcap_{j=1}^r \conv(A_j \setminus C) \neq \emptyset 
\] 
\end{conjecture}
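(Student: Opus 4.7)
The natural starting point is to re-run the proof of Theorem \ref{colored-tverberg-padre}, which already reduces the problem via Sarkaria's tensor trick to finding a colorful choice $X$ in $M$ colored $r$-blocks in $\rr^{(r-1)(d+1)}$ with $\depth(X,0)\ge t+1$; a block corresponds to a color class, and removing $t$ color classes amounts to removing the chosen points from $t$ blocks. The constant $c_r=1/p(r)$ in Theorem \ref{colored-tverberg-padre} arises from a tight bound for independent uniform permutations: in the extremal block, for a worst-case halfspace $H$ each class $F_i$ has exactly one point in $H$ and those $r$ points have distinct colors, so a uniform permutation hits $H$ only with the derangement probability $1-D_r/r!$, which tends to $1-1/e$. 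Any argument that picks one permutation per block independently and uniformly is trapped by this $1-1/e$ barrier; replacing $c_r$ by $1$ forces us to use either correlated choices across blocks or a non-uniform distribution per block.

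I would pursue two complementary avenues. The first is a correlated random selection: fix a sufficiently fine $\varepsilon$-net $\mathcal{N}$ of directions in the Sarkaria space, and for each $v\in \mathcal{N}$ design a biased distribution on per-block permutations whose marginal probability of hitting $H_v=\{x:\langle x,v\rangle\ge 0\}$ is close to $1$ (achievable block by block by biasing toward the permutations that maximize $|F_i\cap H_v|$). Then couple the distributions for different $v$ through a Lov\'asz Local Lemma argument, where the bad event for $v$ is that fewer than $t+1$ blocks contribute to $H_v$; the dependency graph is governed by the bounded combinatorial complexity of halfspaces (Lemma \ref{lemma-dimension}). The second avenue is more combinatorial: apply Theorem \ref{theorem-tverberg} to the $Mr$ pooled points (forgetting colors) to obtain a Tverberg partition with tolerance $\approx M$, and then use a bipartite matching (Hall-type) argument on each color class to recolor the partition into a colorful one while losing only $o(t)$ tolerance.

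The main obstacle, in either avenue, is to move from per-block analysis to a genuinely global argument. In the correlated-choice avenue, one must verify that the halfspace dependency graph has manageable degree so that the Local Lemma gives a useful bound; the tensor geometry of Sarkaria blocks ties together different halfspaces in non-obvious ways, and any naive bound on the dependency degree will be too weak to push the constant from $e/(e-1)$ down to $1$. In the recoloring avenue, one must show that a colorful partition close to a non-colored Tverberg partition inherits most of the tolerance, which requires controlling how local perturbations of a partition affect the global intersection pattern of convex hulls --- a nontrivial stability statement for Tverberg-type intersections. In both cases, the $1-1/e$ barrier is sharp for any proof that treats blocks independently, and I expect the conjecture ultimately to require a qualitatively new idea, possibly exploiting the combinatorial structure of the color classes themselves rather than just the probability that a random permutation hits a halfspace.
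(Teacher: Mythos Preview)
The statement you are attempting is labelled a \emph{Conjecture} in the paper, and the paper does not prove it; immediately after stating it the author writes that ``Conjecture \ref{conjecture-fuerte} seems out of reach,'' and explains why the constraint method of Blagojevi\'c--Frick--Ziegler combined with Theorem \ref{theorem-tverberg} does not suffice. So there is no proof in the paper to compare against.

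Your text is not a proof either, and you essentially say so yourself. You outline two avenues---a correlated/LLL random choice and a recolor-a-Tverberg-partition argument---and for each you identify the ``main obstacle'' without resolving it. In the LLL avenue you never exhibit a distribution on permutations whose hitting probability for \emph{every} halfspace is close to $1$ simultaneously, nor do you bound the dependency degree; both are essential and neither is routine, since a bias that helps one halfspace hurts the opposite one. In the recoloring avenue you invoke an unproved ``stability'' statement: that one can perturb a non-colored Tverberg partition into a colorful one while losing only $o(t)$ tolerance. No such lemma is known, and a Hall-type matching per color class can force many points to change parts, with no control on how the intersection of convex hulls degrades. Your closing sentence---that the conjecture ``ultimately requires a qualitatively new idea''---is an honest assessment, but it confirms that what you have written is a discussion of difficulties, not a proof.
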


This may seem counterintuitive at first sight, as we are removing almost all the points.  One of the novel methods to obtain colorful Tverberg results is the ``constraints'' method by Blagojevi\'c, Frick and Ziegler \cite{Blagojevic:2014js}.  It is tempting to use Theorem \ref{theorem-tverberg} with the constraint method to tackle the conjecture above.  However, since the tolerance given by Theorem \ref{theorem-tverberg} is at most a $(1/r)$-fraction of the total number of points, Conjecture \ref{conjecture-fuerte} seems out of reach.

  Theorem \ref{theorem-tverberg} doesn't improve the previous bounds for $N(t,d,r)$ for low values of $t$.  It is still possible that Larman's result is optimal.

\begin{problem}[Larman 1972]
Determine if there is a set $S$ of $2d+2$ points in $\rr^d$ with the property that, for any partition $A,B$ of $S$, there is a point $x \in S$ such that
\[
\conv(A\setminus \{x\}) \cap \conv (B \setminus \{ x \}) = \emptyset.
\]
\end{problem}

Finally, the topological versions of Tverberg's theorem with tolerance remain open, even in the cases where $r$ is a prime number or a prime power. 

\begin{problem}
Given an integer $n$, denote by $\Delta^n$ the $n$-dimensional simplex, with $n+1$ vertices.  Find the smallest integer $N^*= N^*(t,d,r)$ such that the following holds.  Given any continuous map $f: \Delta^{N^*} \to \rr^d$ there is a partition of the vertices of $\Delta^{N^*}$ into $r$ sets $A_1, \ldots, A_r$ such that for any set $C$ of at most $t$ vertices
\[
\bigcap_{j=1}^r f([A_j \setminus C]) \neq \emptyset,
\] 
where $[X]$ denotes the face spanned by $X$, for any set of vertices $X$.
\end{problem}

So far, not even the Sober\'on-Strausz bound $N^*\le (t+1)(r-1)(d+1)$ is known to hold.  The only bound at the moment is $N^* \le (t+1)(r-1)(d+1)+t$, when $r$ is a prime power.  This follows from taking $t+1$ topological Tverberg partitions, as removing $t$ vertices leaves one of the partitions unaffected.

If one is interested in non-deterministic algorithms that yield Tverberg theorems with tolerance, the proof of our results can be extended to the following.

\begin{theorem}
Let $N,t,d,r$ be positive integers and $\varepsilon >0$ be a real number.  Given $N$ points in $\rr^d$, a random partition of them into $r$ parts is a Tverberg partition with tolerance $t$ with probability at least $1 - \varepsilon$ as long as 
\[
t + 1 \le \frac{N}{r}- \sqrt{\frac12 \left[(d+1)(r-1) N \ln (Nr) + N \ln \left( \frac{1}{\varepsilon}\right)\right]}.
\]
\end{theorem}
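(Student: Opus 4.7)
The plan is to run the Sarkaria-style argument from the proof of Theorem \ref{theorem-tverberg} once more, but keep $\varepsilon$ explicit in the union bound so that the upper estimate on the failure probability comes out as a genuine tail inequality rather than merely a bound that is less than $1$. A uniformly random partition of $\{a_1,\ldots,a_N\}$ into $r$ parts is the same as independently assigning each $a_i$ a uniformly random color $j_i \in \{1,\ldots,r\}$, which is precisely the randomness used inside the proof of Lemma \ref{theorem-carath}.

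Concretely, I would set $n = (d+1)(r-1)$, pick vertices $u_1,\ldots,u_r$ of a regular simplex in $\rr^{r-1}$ centered at $0$, lift each $a_i$ to $b_i = (a_i,1) \in \rr^{d+1}$, and form $F_i = \{b_i \otimes u_j : 1 \le j \le r\} \subset \rr^n$. The random partition then induces a random colorful choice $X = \{x_1,\ldots,x_N\}$ with $x_i$ uniform on $F_i$ and the $x_i$ mutually independent. The calculation already carried out in Section \ref{section-tverberg} shows that the partition has tolerance at least $t$ if and only if $\depth(X,0) \ge t+1$, so it suffices to bound $\mathbb{P}(\depth(X,0) < t+1)$.

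For that, I would invoke Lemma \ref{lemma-dimension} on $Y = \bigcup_i F_i$, a set of $Nr$ points in $\rr^n$, to extract a family $\mathcal{H}$ of at most $(Nr)^n$ closed half-spaces through $0$ that are enough to certify depth. Since $0 \in \conv(F_i)$, we have $\mathbb{P}(x_i \in H) \ge 1/r$ for every $H \in \mathcal{H}$, hence $\mathbb{E}(|X \cap H|) \ge N/r$, and Hoeffding's inequality yields
\[
\mathbb{P}\!\left(|X \cap H| \le \frac{N}{r} - \lambda\right) \le \exp\!\left(-\frac{2\lambda^2}{N}\right).
\]
A union bound over $\mathcal{H}$ shows that the probability that the random partition has tolerance less than $N/r - \lambda - 1$ is at most $\exp(n\ln(Nr) - 2\lambda^2/N)$. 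Demanding this last quantity to be at most $\varepsilon$ and solving for $\lambda$ gives exactly $\lambda = \sqrt{\tfrac{1}{2}[(d+1)(r-1)N\ln(Nr) + N\ln(1/\varepsilon)]}$, which matches the slack in the hypothesis.

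I do not anticipate any genuine obstacle: the proof is a quantitative repackaging of arguments already assembled in Sections \ref{section-carath} and \ref{section-tverberg}. The only point of care is to route $\ln(1/\varepsilon)$ through the Hoeffding estimate on the same footing as the $n\ln(Nr)$ factor coming from $|\mathcal{H}|$, rather than absorbing it (as in the existence proof) into the inequality $\exp(\cdot) < 1$.
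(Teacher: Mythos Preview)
Your proposal is correct and is exactly the argument the paper has in mind: the theorem is stated in the remarks section with the comment that ``the proof of our results can be extended,'' and your write-up carries out precisely that extension---rerunning the Sarkaria lift plus the Hoeffding/union-bound computation from Lemma~\ref{theorem-carath} and Section~\ref{section-tverberg}, but requiring the failure probability to be at most $\varepsilon$ rather than merely less than $1$. The resulting value of $\lambda$ matches the hypothesis on $t+1$ verbatim.
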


The advantage of the result above is that generating the partition is trivial, taking time $N$.  The problem of finding Tverberg partitions, in both its deterministic and non-deterministic version, is interesting.  See, for instance, \cite{Clarksonradon, Miller:2009bg, Mulzer:2013je, algorithmic-new}.




\bibliographystyle{amsalpha}

\bibliography{references}

\noindent Pablo Sober\'on \\
\textsc{
Mathematics Department \\
Northeastern University \\
Boston, MA 02445
}\\[0.1cm]

\noindent \textit{E-mail address: }\texttt{p.soberonbravo@northeastern.edu}

\end{document}